\date{}
\def\theenumi{\arabic{enumi}}
\def\theenumii{\alph{enumii}}
\def\p@enumii{\theenumi.}
\def\theenumiii{\arabic{enumiii}}
\def\p@enumiii{(\theenumi)(\theenumii)}
\def\p@enumiv{\p@enumiii.\theenumiii}
\newtheorem{theorem}{Theorem}[section]
\newtheorem{proposition}[theorem]{Proposition}
\newtheorem{lemma}[theorem]{Lemma}
\theoremstyle{definition}
\newtheorem{example}[theorem]{Example}
\newtheorem{definition}[theorem]{Definition}
\newtheorem{remark}[theorem]{Remark}
\begin{document}
\title{Perturbation results concerning Gaussian estimates and hypoellipticity for left-invariant Laplacians on compact groups}
\author{Qi Hou\thanks{%
Partially supported by NSF grant DMS 1707589} \\ {\small Yanqi Lake Beijing Institute of Mathematical Sciences and Applications}\and 
Laurent Saloff-Coste\thanks{ Partially supported by NSF grant DMS 1707589} \\
{\small Department of Mathematics}\\
{\small Cornell University}  }
\maketitle
\begin{abstract}
In this paper we study left-invariant Laplacians on compact connected groups that are form-comparable perturbations of bi-invariant Laplacians. Our results show that Gaussian bounds for derivatives of heat kernels enjoyed by certain bi-invariant Laplacians hold for their form-comparable perturbations. We further show that the parabolic operators associated with such left-invariant Laplacians, in particular, with the bi-invariant Laplacians, are hypoelliptic in various senses.
\end{abstract}
\section{Introduction}
Let $(G,\nu)$\ be a compact connected metrizable group where $\nu$\ is the normalized Haar measure. The main interest of the results developed here is for infinite-dimensional groups $G$, such as the infinite product of compact Lie groups. Symmetric Gaussian convolution semigroups of measures on $G$\ exhibit a rich variety of behaviours, cf. \cite{Elliptic,orange,continuousdensity}. In \cite{Gaussian}, Bendikov and Saloff-Coste showed that (1) when they exist, continuous density functions of symmetric central Gaussian convolution semigroups of measures $(\mu_t)_{t>0}$\ on $G$\ admit spatial derivatives of all orders in certain directions, and (2) the derivatives satisfy certain Gaussian upper bounds if the density function satisfies $\lim\limits_{t\rightarrow 0}t\log{\mu_t(e)}=0$\ (referred to as the (CK$*$) condition). The same authors studied various function spaces on $G$\ with norms involving different combinations of derivatives, and examined hypoellipticity properties related to the generators of such Gaussian semigroups. See \cite{functionspaces,hypoellipticity}. In these works, the semigroup being central, or equivalently, its generator being bi-invariant, plays an essential role. In the present paper we consider symmetric (noncentral) convolution semigroups on $G$\ (refered to as perturbed semigroups) that are comparable to symmetric central Gaussian semigroups in the sense that their associated Dirichlet forms are comparable.

More precisely, any generator $-\Delta$\ of a symmetric central Gaussian semigroup $(\mu_t^\Delta)_{t>0}$\ on $G$\ is a bi-invariant operator of the form $-\Delta=\sum_{i=1}^{\infty}X_i^2$, where $\{X_i\}_i$\ forms a basis of left-invariant vector fields in some proper sense (i.e. a projective basis). In such a projective basis, the generator $-L$\ of any symmetric Gaussian semigroup $(\mu_t^L)_{t>0}$\ is of the form $-L=\sum_{i,j}a_{ij}X_iX_j$, cf. \cite{Bornformula,Bornbasis}. This operator $L$\ is only left-invariant in general. If $L$\ further satisfies that for some $c,C>0$, the following relation between the Dirichlet forms associated with $\Delta,L$\ holds,
    \begin{eqnarray*}
    c\mathcal{E}_\Delta\leq \mathcal{E}_L\leq C\mathcal{E}_\Delta,
    \end{eqnarray*}
we call $L$\ a form-comparable perturbation of $\Delta$. In this paper we only consider those $L$\ that are form-comparable perturbations of bi-invariant Laplacians $\Delta$.

We show that when the semigroup $\mu_t^\Delta$\ admits a continuous density function, denoted again by $\mu_t^\Delta$, so does the semigroup $\mu_t^L$, and the density functions $\mu_t^\Delta$\ and $\mu_t^L$\ both belong to certain smooth function spaces associated with $\Delta$\ and $L$. See Theorem \ref{Tspacethm}. We then show that when $\mu_t^\Delta$\ further satisfies the (CK$*$) condition, (1) both density functions and their derivatives satisfy certain Gaussian estimates, and (2) both $\partial_t+\Delta$\ and $\partial_t+L$\ are hypoelliptic in various senses, see Proposition \ref{prop1} and Theorem \ref{mainthm}. As in \cite{hypoellipticity}, our proof for the hypoellipticity properties follows the general heat kernel/semigroup approach by Kusuoka and Stroock \cite{Kusuoka}, with the help of some additional ideas to overcome the difficulties brought by dropping the bi-invariance assumption. Note that in the bi-invariant operator case, the hypoellipticity of $\partial_t+\Delta$\ is new even though the hypoellipticity of $\Delta$\ has been studied in \cite{hypoellipticity}.

We organize this paper as follows. In Section 2 we briefly introduce the setting and fix notations. In Section 3 we present the main theorem on hypoellipticity, Theorem \ref{mainthm}, and give an example. Section 4 and Section 5 are devoted to the proof of the hypoellipticity theorem, where other theorems regarding heat kernels are presented first and they constitute an important part of the proof. Some details are postponed to the Appendix.

\section{Setting and notation}
This section contains a minimal introduction to the setting of our study. For more details, see \cite{continuousdensity,Gaussiansurvey,brownianmotion} and the references therein.
\subsection{Gaussian semigroups and generators}
Let $G$\ be a compact connected metrizable group with identity element $e$. To define the (projective) Lie algebra of $G$, we view $G$\ as the projective limit of a sequence of Lie groups $G=\varprojlim_{\alpha\in \aleph} G_\alpha$, cf. \cite{groupstructure2,groupstructure}. Here the index set $\aleph$\ is finite or countable; $G_\alpha=G/K_\alpha$\ where $\{K_\alpha\}$\ is a decreasing sequence of compact normal subgroups with $\cap_{\alpha\in \aleph}K_\alpha=\{e\}$; for $\alpha\leq \beta$, the map $\pi_{\alpha,\beta}:G_\alpha\rightarrow G_\beta$\ is the projection map. Denote the projection map $G\rightarrow G_\alpha=G/K_\alpha$\ by $\pi_\alpha$. The Lie algebra of $G$\ (denoted by $\mathfrak{g}$) is defined to be the projective limit of the Lie algebras of $G_\alpha$\ (denoted by $\mathfrak{g}_\alpha$) with projection maps $d\pi_{\alpha,\beta}$. A family $\{X_i\}_{i\in \mathcal{I}}$\ of elements of $\mathfrak{g}$\ is called a \textit{projective basis} of $\mathfrak{g}$, if for each $\alpha\in \aleph$, there is a finite subset $\mathcal{I}_\alpha\subset \mathcal{I}$, such that $d\pi_\alpha(X_i)=0$\ for all $i\notin \mathcal{I}_\alpha$, and $\{d\pi_\alpha(X_i)\}_{i\in \mathcal{I}_\alpha}$\ is a basis of the Lie algebra $\mathfrak{g}_\alpha$. The Lie algebra $\mathfrak{g}$\ admits many projective bases, cf. \cite{Bornbasis,groupstructure}. For future use, let
\begin{eqnarray*}
\mathbb{R}^{(\mathcal{I})}:=\{\xi=(\xi_i)_{i\in \mathcal{I}}:\mbox{all but finitely many entries are zero}\}.
\end{eqnarray*}

For a fixed projective basis $X=\{X_i\}_{i\in \mathcal{I}}$, consider the following second-order left-invariant differential operator given by
\begin{eqnarray}
\label{laplacian}
L=-\sum_{i,j\in \mathcal{I}}a_{ij}X_iX_j,
\end{eqnarray}
where the coefficient matrix $A=(a_{ij})_{\mathcal{I}\times \mathcal{I}}$\ is real symmetric non-negative, i.e., $a_{ij}=a_{ji}$\ are real numbers, and $\sum a_{ij}\xi_i\xi_j\geq 0$\ for all $\xi=(\xi_i)_{i\in \mathcal{I}}\in \mathbb{R}^{(\mathcal{I})}$. We refer to these differential operators as \textit{sub-Laplacians}, note that the matrix $A$\ can be very degenerate. When $A$\ is positive definite, we call $L$\ a Laplacian. These sub-Laplacians $-L$\ are exactly the (infinitesimal) generators of symmetric Gaussian convolution semigroups on $G$\ (denoted by $(\mu_t^L)_{t>0}$), cf. \cite{hypoellipticity,Bornformula}.

More precisely, recall that a family $(\mu_t)_{t>0}$\ of probability measures on $G$\ is called a \textit{symmetric Gaussian (convolution) semigroup}, if it satisfies the following properties
\begin{itemize}
    \item[(i)] (semigroup property) $\mu_t*\mu_s=\mu_{t+s}$, for any $t,s>0$;
    \item[(ii)] (weakly continuous) $\mu_t\rightarrow \delta_e$\ weakly as $t\rightarrow 0$;
    \item[(iii)] (Gaussian) $t^{-1}\mu_t(V^c)\rightarrow 0$\ as $t\rightarrow 0$\ for any neighborhood $V$\ of the identity $e\in G$;
    \item[(iv)] (symmetric) $\check{\mu}_t=\mu_t$\ for any $t>0$, here $\check{\mu}_t$\ is defined by $\check{\mu}_t(V)=\mu_t(V^{-1})$\ for any Borel subset $V\subset G$.
\end{itemize}
We postpone recalling the definitions of convolutions till the end of this subsection.

A sub-Laplacian $-L=\sum a_{ij}X_iX_j$\ (\ref{laplacian}) is the generator of a symmetric Gaussian semigroup $(\mu_t^L)_{t>0}$\ in the sense that for proper smooth functions $f$\ (functions in the Bruhat test function space $\mathcal{B}(G)$, to be defined below),
\begin{eqnarray*}
\sum_{i,j\in \mathcal{I}} a_{ij}X_iX_jf=-Lf=\lim_{t\rightarrow 0}\frac{H^L_tf-f}{t}.
\end{eqnarray*}
Here $(H^L_t)_{t>0}$\ denotes the Markov semigroup associated with $(\mu_t^L)_{t>0}$\ via
\begin{eqnarray*}
H_t^Lf(x)=\int_Gf(xy)\,d\mu_t^L(y),\ \forall f\in C(G),
\end{eqnarray*}
and then extended to $L^2(G,\nu)$. The semigroup $(H_t^L)_{t>0}$\ is self-adjoint and commutes with left translations. See e.g. \cite{hypoellipticity} for more details. Let $(\mathcal{E}_L,\mathcal{F}_L)$\ denote the Dirichlet form associated with $(H_t^L)_{t>0}$\ with domain $\mathcal{F}_L$.

The Bruhat test function space $\mathcal{B}(G)$\ is defined as
\begin{eqnarray*}
\mathcal{B}(G):=\left\{f:G\rightarrow \mathbb{R}:f=\phi\circ\pi_\alpha\ \mbox{for some }\alpha\in \aleph,\ \phi\in C^\infty(G_\alpha)\right\}.
\end{eqnarray*}
Here $C^\infty(G_\alpha)$\ denotes the set of all smooth functions on $G_\alpha$. $\mathcal{B}(G)$\ is independent of the choice of $\{K_\alpha\}_{\alpha\in \aleph}$, cf. \cite{Bruhat,hypoellipticity}. The Bruhat test functions are generalizations of cylindric functions on the infinite torus $\mathbb{T}^\infty$\ (i.e.  smooth functions that depend on only finitely many coordinates). Let $\mathcal{B}'(G)$\ be the dual space of $\mathcal{B}(G)$\ with the strong dual topology, elements of $\mathcal{B}'(G)$\ are called Bruhat distributions. In the next section we introduce more smooth function spaces on $G$\ and their dual spaces as distribution spaces.

Finally, we recall the following definitions of convolutions.
\begin{itemize}
    \item The convolution of any two Borel measures $\mu_1,\mu_2$\ on $G$, $\mu_1*\mu_2$, is a measure defined by
\begin{eqnarray*}
\mu_1*\mu_2(f)=\int_{G\times G}f(xy)\,d\mu_1(x)d\mu_2(y), \ \forall f\in C(G).
\end{eqnarray*}
\item The convolution of any two functions $f,g\in C(G)$\ is the function given by
\begin{eqnarray*}
f*g(x)=\int_Gf(xy^{-1})g(y)\,d\nu(y)=\int_Gf(y)g(y^{-1}x)\,d\nu(y).
\end{eqnarray*}
\item The convolution of a function $f\in C(G)$\ and a Borel measure $\mu$\ is defined as
\begin{eqnarray*}
\mu*f(x)=\int_G f(y^{-1}x)\,d\mu(y),\ f*\mu(x)=\int_G f(xy^{-1})d\mu(y).
\end{eqnarray*}
In this notation, $H_tf(x)=f*\check{\mu}_t(x)$.
\item The convolution of a function $f\in \mathcal{B}(G)$\ and a distribution $U\in \mathcal{B}'(G)$\ is defined by
\begin{eqnarray*}
(f*U)(\phi)=U(\check{f}*\phi),\ (U*f)(\phi)=U(\phi*\check{f}),\ \forall \phi\in \mathcal{B}(G).
\end{eqnarray*}
\end{itemize}

\subsection{Smooth function spaces and distribution spaces}
Let $G$\ be a compact connected metrizable group as above and let $I\subset \mathbb{R}$\ be an open interval. Corresponding to $\mathcal{B}(G)$\ defined in the first subsection, define the Bruhat test function space on $I\times G$, $\mathcal{B}(I\times G)$, as
\begin{eqnarray*}
\mathcal{B}(I\times G):=\left\{f:I\times G\rightarrow \mathbb{R}:f=\phi\circ\widetilde{\pi}_\alpha\ \mbox{for some }\alpha\in \aleph,\ \phi\in C^\infty_c(I\times G_\alpha)\right\}.
\end{eqnarray*}
Here $\widetilde{\pi}_\alpha:I\times G\rightarrow I\times G_\alpha$, $(t,x)\mapsto (t,\pi_\alpha(x))$\ is the projection map. See \cite{Bruhat} for more details.

In the following we review some norms and seminorms and consider the function spaces as completions of $\mathcal{B}(G)$\ with respect to these (semi)norms. See \cite{functionspaces} for detailed discussions of these function spaces. Along the way we define their corresponding spaces on $I\times G$; they serve as generalizations of the classical test function space $\mathcal{D}(\mathbb{R}^n)$\ to spaces of test functions in $I\times G$.
\begin{remark}
We only define the ``smooth'' function spaces in that the completions are w.r.t. the (semi)norms of all orders. The completions w.r.t. (semi)-norms of orders up to some $k\in \mathbb{N}$\ can be correspondingly defined, and denoted accordingly with the ``$\infty$'' superscripts replaced by $k$.
\end{remark}

\paragraph{Spaces $\mathcal{C}^\infty_X(G)$\ and $\mathcal{C}^\infty_{X}(I\times G)$} 
Given any projective basis $\{X_i\}_{i\in \mathcal{I}}$\ and multi-index $l=(l_1,l_2,\cdots,l_k)\in \mathcal{I}^k$\ where $k\in \mathbb{N}$, for any $f\in \mathcal{B}(G)$, denote
\begin{eqnarray*}
X^lf(x)=X_{l_1}X_{l_2}\cdots X_{l_k}f(x)=:D_x^kf(X_{l_1},X_{l_2},\cdots,X_{l_k}).
\end{eqnarray*}
For $k=0$, this is taken as $f$\ itself. Let $\mathcal{C}^\infty_X(G)$\ be the completion of $\mathcal{B}(G)$\ w.r.t. the seminorms
\begin{eqnarray*}
\left\{||X^lf||_{L^\infty(G)}=\sup_{G}|X^lf|:l\in \mathcal{I}^k,\ k\in \mathbb{N}\right\}.
\end{eqnarray*}

Correspondingly, for any open interval $J\Subset I$, let $\mathcal{C}^\infty_{0,X}(J\times G)$\ be the completion of $\mathcal{B}(J\times G)$\ w.r.t. the seminorms
\begin{eqnarray*}
\left\{||\partial_t^mX^lf||_{L^\infty(J\times G)}=\sup_{J\times G}|\partial_t^mX^lf|:l\in \mathcal{I}^k,\ m,k\in \mathbb{N}\right\}.
\end{eqnarray*}
Here the subscript ``$0$'' refers to the vanishing of the function and all its derivatives on the boundary $\partial J$. To clarify notations, if $f=\phi\circ\widetilde{\pi}_\alpha$\ as in the definition of $\mathcal{B}(J\times G)$, then
\begin{eqnarray*}
\partial_t^mX_{l_1}\cdots X_{l_k}f(t,x)=\left(\partial_t^m d\pi_\alpha(X_{l_1})\cdots d\pi_\alpha(X_{l_k})\phi\right)(t,\pi_\alpha(x)).
\end{eqnarray*}

The spaces are equipped with the topology defined by the seminorms above. Note that for any two open intervals $I_1\Subset I_2$, functions in $\mathcal{C}^\infty_{0,X}(I_1\times G)$\ can be naturally extended by $0$\ to $I_2\times G$, thus $\mathcal{C}^\infty_{0,X}(I_1\times G)\subset \mathcal{C}^\infty_{0,X}(I_2\times G)$.

Finally, define $\mathcal{C}^\infty_{X}(I\times G)$\ as the inductive limit of the spaces $\{\mathcal{C}^\infty_{0,X}(J\times G)\}_{J\Subset I}$.

\paragraph{Spaces $\mathcal{S}^\infty_X(G)$\ and $\mathcal{S}^\infty_{X}(I\times G)$} 
For any $k\in \mathbb{N}$, $f\in \mathcal{B}(G)$, define a function $|D^kf|_X:G\rightarrow [0,+\infty]$\ given by
\begin{eqnarray*}
\lefteqn{\left|D^kf\right|_X(x)=\left|D_x^kf\right|_X}\\
&&:=\left(\sum_{l\in \mathcal{I}^k}|D^k_xf(X_{l_1},X_{l_2},\cdots,X_{l_k})|^2\right)^{1/2}=\left(\sum_{l\in \mathcal{I}^k}X^lf(x)\right)^{1/2}.
\end{eqnarray*}
When $k=1$, $|D^1_xf|^2_X=\sum_{i\in \mathcal{I}} |X_if(x)|^2$\ is the square of the gradient of $f$, often denoted by $\Gamma(f,f)$\ in the literature, see e.g. \cite{functionspaces}. Let $\mathcal{S}^\infty_X(G)$\ be the completion of $\mathcal{B}(G)$\ w.r.t. the norms $\{S^k_X\}_{k\in \mathbb{N}}$\ defined as 
\begin{eqnarray*}
S^k_X(f):=\sup_{m\leq k}\left|\left|\left|D^mf\right|_X\right|\right|_{L^\infty(G)}=\sup_{0\leq m\leq k}\sup_{x\in G}\left|D^m_xf\right|_X.
\end{eqnarray*}
\begin{remark}
\label{LXsubscripts}
It is shown in \cite{functionspaces} that for any two projective bases $\{X_i\}$\ and $\{Z_i\}$\ such that $\sum X_i^2=\sum Z_i^2$, for all $x\in G$\ and $k\in \mathbb{N}$, $|D^k_xf|_X=|D^k_xf|_Z$\ and $S^k_X(f)=S^k_Z(f)$. In other words, the space $\mathcal{S}^\infty_X(G)$\ depends on the projective basis $\{X_i\}_{i\in\mathcal{I}}$\ through the sum of squares operator given by the basis, i.e., through
\begin{eqnarray*}
L=-\sum_{i\in \mathcal{I}} X_i^2.
\end{eqnarray*}
Since $\{X_i\}$\ is taken as a projective basis, $L$\ is what we called a Laplacian earlier. Below, we write $|D^k_xf|_L$\ and $\mathcal{S}^\infty_L(G)$\ instead of $|D^k_xf|_X$\ and $\mathcal{S}^\infty_X(G)$.
\end{remark}
For any projective basis $X=\{X_i\}_{i\in \mathcal{I}}$\ and $L=-\sum X_i^2$, for any open interval $J\Subset I$, the space $\mathcal{S}^\infty_{0,X}(J\times G)=\mathcal{S}^\infty_{0,L}(J\times G)$\ is defined as the completion of $\mathcal{B}(J\times G)$\ w.r.t. the (semi)norms
\begin{eqnarray*}
\left\{S^{k,p}_L(J\times G,\,f):=\sup_{0\leq a\leq p}\sup_{t\in J}S^k_L(\partial_t^af)=\sup_{\substack{0\leq a\leq p\\0\leq m\leq k}}\sup_{(t,x)\in J\times G}\left|D^m_x(\partial_t^af)\right|_L\right\}_{k,p\in \mathbb{N}}.
\end{eqnarray*}
Here writing $J\times G$\ in $S^{k,p}_L(J\times G,\,f)$\ is to clarify the set on which the supremum is taken.

Finally, the space $\mathcal{S}^\infty_{L}(I\times G)$\ is defined as the inductive limit of the spaces $\{\mathcal{S}^\infty_{0,L}(J\times G)\}_{J\Subset I}$.

\paragraph{Spaces $\mathcal{T}^\infty_L(G)$\ and $\mathcal{T}_L^\infty(I\times G)$}
It is not known if $\mathcal{S}^\infty_L(G)$\ is contained in the $C(G)$-domain of $L$, see the comment in \cite{functionspaces}. There, to address this problem, the authors introduced the space $\mathcal{T}_L^\infty(G)$\ which we now recall the definition of.

Fix any projective basis $X=\{X_i\}_{i\in \mathcal{I}}$\ and let $L=-\sum_{i\in \mathcal{I}}X_i^2$. For any $k\in \mathbb{N}$\ and $\lambda=(\lambda_0,\lambda_1,\cdots,\lambda_k)\in \mathbb{N}^{k+1}$, for any $f\in \mathcal{B}(G)$, let
\begin{eqnarray*}
|D^{k,\lambda}_xf|_X=|D^{k,\lambda}_xf|_L:=\left(\sum_{l\in \mathcal{I}^k}|L^{\lambda_0}X_{l_1}L^{\lambda_1}X_{l_2}L^{\lambda_2}\cdots X_{l_k}L^{\lambda_k}f(x)|^2\right)^{1/2}.
\end{eqnarray*}
The subscripts can be written as $X$\ or $L$\ because the quantity depends only on the sum of squares associated with the projective basis $X$, see also Remark \ref{LXsubscripts}. Now we take supremum over $k$\ and $\lambda$\ to build a norm. We first introduce the following two abbreviated notations.
\begin{itemize}
    \item For any $l\in \mathcal{I}^k$\ and $\lambda\in \mathbb{N}^{k+1}$, set
\begin{eqnarray*}
P_L^{l,\lambda}f:=L^{\lambda_0}X_{l_1}L^{\lambda_1}X_{l_2}L^{\lambda_2}\cdots X_{l_k}L^{\lambda_k}f.
\end{eqnarray*}
For any bi-invariant Laplacian $\Delta$, $P_\Delta^{l,\lambda}f=X_{l_1}X_{l_2}\cdots X_{l_k}\Delta^{\lambda_0+\cdots+\lambda_k}f$.
\item For any $k,m\in \mathbb{N}$, let $\Lambda(k,m)$\ denote the set of all possible $(k+1)-$tuples of integers that sum up to $m$, i.e.,
\begin{eqnarray*}
\Lambda(k,m):=\{\lambda=(\lambda_0,\lambda_1,\cdots,\lambda_k):\lambda_i\in \mathbb{N},\ \sum_{i=0}^{k}\lambda_i=m\}.
\end{eqnarray*}
\end{itemize}
Using these notations, for any $N\in \mathbb{N}$, define
\begin{eqnarray*}
\lefteqn{M^N_X(f)=M^N_L(f):=}\\
&&\hspace{-.2in}\sup_{x\in G}\sup_{\substack{k,m\in \mathbb{N}\\k+2m\leq N}}\sup_{\lambda\in \Lambda(k,m)}\left|D_x^{k,\lambda}f\right|_L=\sup_{x\in G}\sup_{\substack{k,m\in \mathbb{N}\\k+2m\leq N}}\sup_{\lambda\in \Lambda(k,m)}\left(\sum_{l\in \mathcal{I}^k}|P_L^{l,\lambda}f(x)|^2\right)^{1/2}.
\end{eqnarray*}

Define $\mathcal{T}^\infty_L(G)$\ as the completion of $\mathcal{B}(G)$\ w.r.t. the norms $\{M^N_L\}_{N\in \mathbb{N}}$. For any open interval $J\Subset I$, define $\mathcal{T}_{0,L}^\infty(J\times G)$\ as the completion of $\mathcal{B}(J\times G)$\ w.r.t. the set of (semi)norms
\begin{eqnarray*}
\lefteqn{\left\{M^{N,p}_L(J\times G,\,f):=\sup_{0\leq a\leq p}\sup_{t\in J}M^N_L(\partial_t^af)\right.}\\
&&\left.=\sup_{\substack{0\leq a\leq p\\k,m\in \mathbb{N}\\k+2m\leq N}}\sup_{(t,x)\in J\times G}\sup_{\lambda\in \Lambda(k,m)}\left(\sum_{l\in \mathcal{I}^k}|P_L^{l,\lambda}\partial_t^af(t,x)|^2\right)^{1/2}\right\},
\end{eqnarray*}
where $N,p\in \mathbb{N}$. As before, the space $\mathcal{T}_{L}^\infty(I\times G)$\ is defined as the inductive limit of $\{\mathcal{T}_{0,L}^\infty(J\times G)\}_{J\Subset I}$.
\paragraph{Distribution spaces $\mathcal{T}'_L(G)$\ and $\mathcal{T}'_L(I\times G)$} Let $\mathcal{T}'_L(G)$\ and $\mathcal{T}'_L(I\times G)$\ be the dual spaces of $\mathcal{T}^\infty_L(G)$\ and $\mathcal{T}^\infty_{L}(I\times G)$, respectively, equipped with the strong dual topology. Recall that, it means that for any $U\in \mathcal{T}'_L(I\times G)$, for any precompact open subset $J\Subset I$, there exist some $C(J)>0$\ and $N(J),p(J)\in \mathbb{N}_+$\ (written as $N,p$\ below) such that for any $f\in \mathcal{T}^\infty_{0,L}(J\times G)$,
\begin{eqnarray*}
|U(f)|\leq C(J)M_L^{N,p}(f).
\end{eqnarray*}

\begin{remark}
In this paper, we deal with distributional solutions $U$\ of the heat equation $(\partial_t+L)U=F$. Because $\mathcal{C}^\infty_X$\ and $\mathcal{S}^\infty_L$\ are not (may not be) contained in the domain of $L$, to make sense of $LU$\ we do not consider the dual spaces of $\mathcal{C}^\infty_X$\ or $\mathcal{S}^\infty_L$.
\end{remark}

To summarize, the spaces above satisfy the relation
\begin{eqnarray*}
\mathcal{B}(I\times G)\subset \mathcal{T}^\infty_{L}(I\times G)\subset \mathcal{S}^\infty_{L}(I\times G)\subset \mathcal{C}^\infty_{X}(I\times G)\subset \mathcal{T}'_L(I\times G)\subset \mathcal{B}'_L(I\times G);
\end{eqnarray*}
the same inclusion relation holds for the corresponding spaces on $G$.


By the previously reviewed definitions of convolutions, the convolution of any two functions $f,g\in C(\mathbb{R}\times G)$\ (at least one function with compact support) is the function given by
\begin{eqnarray}
\label{convdef}
f* g(s,x)=\int_\mathbb{R}\int_Gf(t,z)g(s-t,z^{-1}x)\,dtd\nu(z).
\end{eqnarray}
{\bf Notation}. To emphasize the difference between convolution on $G$\ and convolution on $I\times G$, in the rest of the paper we write $\star$\ instead of $*$\ for convolutions involving both time and space. For example, we write $f\star g$\ for (\ref{convdef}).

In this paper we also use the following convolution. Let $f$\ be any function satisfying that $f,\check{f}\in \mathcal{T}_L^\infty(\mathbb{R}\times G)$, here $\check{f}(s,x)=f(-s,x^{-1})$. Let $U$\ be any distribution in $\mathcal{T}_L'(I\times G)$\ with compact support in $I\times G$. We may define their convolutions as
\begin{eqnarray*}
(f\star U)(\phi)=U(\check{f}\star\phi),\ (U\star f)(\phi)=U(\phi\star \check{f}),\  \forall \phi\in \mathcal{T}^\infty_L(I\times G),
\end{eqnarray*}
These convolutions are well-defined, since (1) both $\check{f}\star\phi$\ and $\phi\star\check{f}$\ belong to $\mathcal{T}_L(\mathbb{R}\times G)$; (2) as $U$\ has compact support in $I\times G$, there is some function $\eta\in \mathcal{T}_L(I\times G)$\ with compact support such that $U=\eta U$.

\subsection{Bi-invariant Laplacians and their perturbations}
By definition, a symmetric Gaussian semigroup $(\mu_t)_{t>0}$\ is called \textit{central}, if $\check{\mu}_t=\mu_t$\ for all $t>0$. Central semigroups commute with any other factor in convolutions. Their generators are bi-invariant Laplacians. In this paper we use the symbol $-\Delta$\ to denote the generator of a central symmetric Gaussian semigroup; the symbol $-L$\ stands for general left-invariant sub-Laplacians that may or may not be bi-invariant. As a bi-invariant operator, the operator $\Delta$\ satisfies
\begin{eqnarray*}
\Delta Zf=Z\Delta f
\end{eqnarray*}
for any left-invariant vector field $Z\in \mathfrak{g}$\ and smooth funciton $f\in \mathcal{B}(G)$.

Let $L,P$\ be two sub-Laplacians (i.e. generators of some symmetric Gaussian semigroups). We say in this paper that $L,P$\ are \textit{form-comparable perturbations} of each other, if their corresponding Dirichlet forms $(\mathcal{E}_P,\mathcal{F}_P)$\ and $(\mathcal{E}_L,\mathcal{F}_L)$\ are comparable. That is, there exist some $c,C>0$, s.t. for any $f\in \mathcal{B}(G)$,
\begin{eqnarray}
\label{comparableforms}
c\mathcal{E}_P(f,f)\leq \mathcal{E}_L(f,f)\leq C\mathcal{E}_P(f,f).
\end{eqnarray}
Note that (\ref{comparableforms}) implies that $\mathcal{F}_L=\mathcal{F}_P$. In the present paper we study perturbations of bi-invariant Laplacians, these perturbations have to be Laplacians themselves.

Given a bi-invariant Laplacian $\Delta$\ and a left-invariant Laplacian $L$, we can mix them up to define the function/distribution spaces $\mathcal{T}^\infty_{\Delta,L}(I\times G)$, $\mathcal{T}'_{\Delta,L}(I\times G)$\ as follows. Suppose $\Delta=-\sum_{i\in \mathcal{I}} X_i^2$. The space $\mathcal{T}^\infty_{\Delta,L}(I\times G)$\ is the inductive limit of $\{\mathcal{T}^\infty_{0,\Delta,L}(J\times G)\}_{J\Subset I}$, where $\mathcal{T}^\infty_{0,\Delta,L}(J\times G)$\ for each open interval $J\Subset I$\ is defined as the completion of $\mathcal{B}(J\times G)$\ with respect to the (semi)norms
\begin{eqnarray}
\label{mixednorm}
&&\hspace{-.5in}M^{N,p}_{\Delta,L}(J\times G,\,f):=\notag\\
&&\hspace{-.5in}\sup_{\substack{0\leq a\leq p\\2b+k+2m\leq N\\(t,x)\in J\times G}}\sup_{\substack{\lambda\in \Lambda(k,m)\\\lambda=(\lambda_0,\cdots,\lambda_k)}}\left(\sum_{l\in \mathcal{I}^k}|L^{\lambda_0}X_{l_1}L^{\lambda_1}\cdots X_{l_k}L^{\lambda_k}\Delta^{b}\partial_t^af(t,x)|^2\right)^{1/2}.
\end{eqnarray}
$\mathcal{T}'_{\Delta,L}(I\times G)$\ is the strong topological dual of $\mathcal{T}^\infty_{\Delta,L}(I\times G)$.
\begin{remark}
\label{normequiv}
Suppose $L=-\sum_{i\in \mathcal{I}}Y_i^2$\ for another projective basis $\{Y_i\}$. In the Appendix, we show that the norms defined using $\{Y_i\}$\ instead of $\{X_i\}$\ or using both $\{X_i\}$\ and $\{Y_i\}$\ is equivalent to the norms (\ref{mixednorm}) above.
\end{remark}

\subsection{Properties of the spaces}
\label{convorderremark}
We record here some properties of the above function spaces that are particularly useful in the present paper. See \cite{functionspaces} for more details. The first three items hold for function spaces associated with general left-invariant sub-Laplacians $L=-\sum Y_i^2$.
\begin{itemize}
    \item[(i)] All the function spaces on $G$\ introduced above ($\mathcal{B}(G)$, $\mathcal{C}^k_Y(G)$, $\mathcal{S}^k_L(G)$, $\mathcal{T}_L^k(G)$, $k\in \mathbb{N}\cup\{\infty\}$) are algebras for pointwise multiplication. Cf. \cite[Sections 2,\,3]{functionspaces}. Based on this fact it is straightforward to check that the corresponding function spaces on $I\times G$\ are algebras for pointwise multiplication as well.
    \item[(ii)] Let $\mathfrak{S}(G)$\ be any of the function spaces $\mathcal{B}(G)$, $\mathcal{C}^k_Y(G)$, $\mathcal{S}^k_L(G)$, and $\mathcal{T}_L^k(G)$. For any Borel measure $\mu$, for any function $f\in \mathfrak{S}(G)$, $\mu*f\in \mathfrak{S}(G)$, and the map (convolution with $\mu$\ on the left)
    \begin{eqnarray*}
    \mu*: \mathfrak{S}(G)\rightarrow \mathfrak{S}(G),\ f\mapsto \mu*f
    \end{eqnarray*}
    is continuous. Let $||\cdot||$\ denote any norm involved in the definitions of these spaces, then $||\mu*f||\leq |\mu|||f||$\ where $|\mu|$\ is the total mass of $\mu$. The same statement holds for $\mu\star:\mathfrak{S}(I\times G)\rightarrow \mathfrak{S}(I\times G)$, if the measure $\mu$\ is on $I\times G$.
    \item[(iii)] For any sequence $\phi_n\in L^1(G)$\ with $\phi_n\rightarrow \delta_e$\ weakly as $n\rightarrow \infty$, for any $f\in \mathfrak{S}(G)$\ as in the previous item, the sequence $f_n:=\phi_n*f$\ converges to $f$\ in $\mathfrak{S}(G)$. The same statement holds for any sequence $\phi_n\in L^1((0,1)\times G)$\ with $\phi_n\rightarrow \delta_{(0,e)}$\ weakly and any $f\in \mathfrak{S}(I\times G)$.
\end{itemize}

When $\mathfrak{S}=\mathcal{B}(G)$\ or $\mathcal{B}(I\times G)$, conclusions of (ii) and (iii) hold too for convolutions on the right. When the space $\mathfrak{S}$\ is any of the other three types, however, due to the fact that the differential operators are left-invariant (they go to the rightmost function in a convolution), the order of the convolutions in (ii) and (iii) is crucial. For instance, even if the approximation of identity $\{\phi_n\}$\ in (iii) consists of very nice functions that are in $\mathcal{B}(G)$\ or $\mathcal{B}(I\times G)$, for an arbitrary function $f$\ in $\mathcal{T}^k_L(G)$\ or $\mathcal{T}^{k,p}_L(I\times G)$, it is not clear in general if $f*\phi_n$\ converges to $f$\ in the $\mathcal{T}$-space. On the other hand, using a trick mentioned in \cite{functionspaces,hypoellipticity} regarding making use of certain right-invariant vector fields, one can make some affirmative statements for convolutions on the right. We list a few relevant results here. As above, the results are true for spaces of each finite order, and the underlying space can be $G$\ or $I\times G$. For simplicity we only write the superscript $\infty$\ and do no specify the underlying space.
\begin{itemize}
    \item[(iv)] For a bi-invariant Laplacian (denoted by $\Delta$), conclusions of (ii) and (iii) hold for convolutions on the right when $\mathfrak{S}=\mathcal{S}^\infty_\Delta$\ or $\mathcal{T}_\Delta^\infty$.
    \item[(v)] For a left-invariant Laplacian $L$\ that is form comparable to a bi-invariant Laplacian $\Delta$, by Lemma \ref{comparablelemma} in the Appendix, $\mathcal{S}^\infty_L=\mathcal{S}^\infty_\Delta$. Hence (ii) and (iii) hold for convolutions on the right when $\mathfrak{S}=\mathcal{S}_L^\infty$.
\end{itemize}

Finally we record a result for $\mathcal{C}$-type spaces. For any left-invariant vector field $Z$, let $\breve{Z}$\ be the right-invariant vector field on $G$\ satisfying that $\breve{Z}(e)=Z(e)$. For any projective basis $Y=\{Y_i\}$\ of left-invariant vector fields, in \cite{hypoellipticity} the authors considered a right-invariant version of the $\mathcal{C}^k_Y$\ space ($k\in \mathbb{N}$), denoted by $\mathcal{RC}^k_Y$, which is analogously defined as the completion of the Bruhat space w.r.t. the seminorms $||\breve{Y}^lf||_\infty$. A special case is when $Y$\ is a so-called special projective basis, which roughly speaking requires that each $Y_i$\ is a finite linear combination of $\{\breve{Y_j}\}$: $Y_i=\sum_{j\in J(i)} a_{ij}\breve{Y}_j$, where each $a_{ij}\in \mathcal{B}(G)$\ and $J(i)$\ is a finite index set, and vice versa for each $\breve{Y_i}$. See \cite{hypoellipticity} for more details. When $Y$\ is a special projective basis,  $\mathcal{C}^k_Y=\mathcal{RC}^k_Y$. For a bi-invariant Laplacian, $\Delta$, there exists a special projective basis $X=\{X_i\}$\ such that $\Delta=-\sum X_i^2$. Hence the next item follows.
\begin{itemize}
    \item[(vi)] For a bi-invariant Laplacian, $\Delta$, and any special projective basis, $X=\{X_i\}$, such that $\Delta=-\sum X_i^2$, (ii) and (iii) hold when $\mathfrak{S}=\mathcal{C}^\infty_X=\mathcal{RC}^\infty_X$.
\end{itemize}

\section{Statement of the main results}
As preparations for stating the main results, we first review some definitions mentioned in the introduction.
\begin{definition}[Property (CK$*$)]
Let $(\mu_t)_{t>0}$\ be a symmetric Gaussian convolution semigorup. We say that $(\mu_t)_{t>0}$\ satisfies Property (CK$*$), if for any $t>0$, $\mu_t$\ admits a continuous density $\mu_t(\cdot)$\ w.r.t. the Haar measure $\nu$, and the density function satisfies that
    \begin{eqnarray}
    \label{CK*}
    \lim_{t\rightarrow 0^+}t\log{\mu_t(e)}=0.
    \end{eqnarray}
\end{definition}
\begin{remark}
\label{CKremark}
Let $\Delta$\ be a bi-invariant Laplacian satisfying Property (CK$*$). Let $L$\ be a left-invariant Laplacian that is form-comparable to $\Delta$. Then there are some $\beta,\gamma>0$\ such that $\mathcal{E}_{\beta \Delta}\leq \mathcal{E}_L \leq \mathcal{E}_{\gamma \Delta}$. Denote the two operators' associated (symmetric) semigroups as $\mu_t^\Delta$, $\mu_t^L$, respectively. Then $\mu_t^{L}=\mu_t^{L-\beta \Delta}*\mu_t^{\beta \Delta}$, indicating that $\mu_t^L$\ admits a continuous density as $\mu_t^{\beta \Delta}$\ does. Moreover, because $\mu_t^L$\ is symmetric and $||\mu_t^L||_{L^\infty(G)}=\mu_t^L(e)$,
\begin{eqnarray*}
\mu_t^{L}(e)=\mu_t^{L-\beta \Delta}*\mu_t^{\beta \Delta}(e)\leq \mu_t^{\beta \Delta}(e).
\end{eqnarray*}
Hence $\mu_t^{L}$\ also satisfies (CK$*$).
\end{remark}
\begin{definition}[Hypoellipticity]
Let $L$\ be a left-invariant sub-Laplacian on $G$\ and $I\subset \mathbb{R}$\ be an open interval. Let $\mathfrak{A}$\ be a space of distributions in time and space. Let $\mathfrak{S}$\ be a space of continuous functions on $I\times G$. The associated parabolic operator $\partial_t+L$\ is said to be $\mathfrak{A}$-$\mathfrak{S}$-hypoelliptic, if for any $U\in \mathfrak{A}$\ and $F\in \mathcal{B}'(I\times G)$\ such that
\begin{eqnarray*}
(\partial_t+L)U=F\ \mbox{in }\mathcal{B}'(I\times G),
\end{eqnarray*}
and for any open subset $J\times\Omega\subset I\times G$\ such that
\begin{eqnarray*}
\forall \varphi\in \mathcal{B}_c(J\times \Omega),\ \varphi F\in \mathfrak{S},
\end{eqnarray*}
$U$\ satisfies that
\begin{eqnarray*}
\forall \varphi\in \mathcal{B}_c(J\times \Omega),\ \varphi U\in \mathfrak{S}.
\end{eqnarray*}
\end{definition}
Here $\mathcal{B}_c(J\times \Omega)$\ consists of functions in $\mathcal{B}(I\times G)$\ with compact support in $J\times \Omega$. For example, $\mathfrak{A}$\ can be $\mathcal{T}_{\Delta,L}'(I\times G)$, and $\mathfrak{S}$\ can be $C(I\times G)$. 
\begin{remark}
\label{hypoequiv}
Observe that for any $N,p\in \mathbb{N}\cup\{\infty\}$, functions in $\mathcal{S}^{N,p}_L(I\times G)$\ have compact supports in $I\times G$, whereas functions in $C^p(I\rightarrow \mathcal{S}^N_L(G))$\ need not. Nevertheless, by definition, being $\mathcal{T}'_{L}(I\times G)-\mathcal{S}_L^{N,p}(I\times G)-$hypoelliptic is equivalent to being $\mathcal{T}'_{L}(I\times G)-C^p\left(I\rightarrow \mathcal{S}_L^{N}(G)\right)-$hypoelliptic. The same is true for $\mathcal{C}$- and $\mathcal{T}$-type spaces.
\end{remark}
The main results of this paper regarding hypoellipticity are summarized in the following theorem.
\begin{theorem}
\label{mainthm}
Let $(G,\nu)$\ be a compact connected metrizable group with normalized Haar measure $\nu$, let $I\subset \mathbb{R}$\ be an open interval. Let $(\mu_t^\Delta)_{t>0}$\ be a symmetric central Gaussian convolution semigroup on $G$\ with generator $-\Delta$\ and write $\Delta=-\sum X_i^2$\ for some special projective basis $\{X_i\}_{i\in \mathcal{I}}$. Suppose $(\mu_t^\Delta)_{t>0}$\ satisfies Property (CK$*$). Let $L$\ be any form-comparable perturbation of $\Delta$\ in the sense of (\ref{comparableforms}). Then the parabolic operator $\partial_t+L$\ is $\mathcal{T}'_{\Delta,L}-\mathfrak{S}-$hypoelliptic. Here $\mathfrak{S}$\ can be $\mathcal{T}_\Delta^{N,p}$, $\mathcal{S}^{N,p}_L=\mathcal{S}^{N,p}_\Delta$, $\mathcal{C}_X^{N,p}$, where $N,p\in \mathbb{N}\cup\{\infty\}$; all spaces are on $I\times G$. 
\end{theorem}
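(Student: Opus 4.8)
The plan is to follow the heat kernel / semigroup strategy of Kusuoka and Stroock, as used in \cite{hypoellipticity}, adapted to the perturbed setting. The starting point will be the companion results announced in the introduction (Theorem \ref{Tspacethm}, Proposition \ref{prop1}, Theorem \ref{mainthm}'s predecessors), namely that under Property (CK$*$) the density $\mu_t^\Delta$ and its perturbed counterpart $\mu_t^L$ both belong to the relevant smooth function spaces on $I \times G$ and satisfy Gaussian-type upper bounds for all the derivatives involved in the $\mathcal{T}$-, $\mathcal{S}$-, $\mathcal{C}$-norms. Concretely, I would first record that, viewing $p^L(t,x) := \mu_t^L(x)\mathbf{1}_{t>0}$ as a distribution on $\mathbb{R}\times G$, it is the fundamental solution of $\partial_t + L$: one has $(\partial_t + L)p^L = \delta_{(0,e)}$ in $\mathcal{B}'(\mathbb{R}\times G)$, and $p^L$ and $\check p^L$ lie in $\mathcal{T}^\infty_{\Delta,L}(\mathbb{R}_{>0}\times G)$ locally, with the CK$*$ condition guaranteeing integrability near $t=0$ so that the convolution $p^L \star (\cdot)$ makes sense against compactly supported distributions in $\mathcal{T}'_{\Delta,L}$ (using the convolution of a $\mathcal{T}$-function with a $\mathcal{T}'$-distribution defined at the end of Section 2.2).

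The core argument is the standard parametrix/localization step. Suppose $(\partial_t + L)U = F$ with $U \in \mathcal{T}'_{\Delta,L}(I\times G)$ and $F$ locally in $\mathfrak{S}$ on $J\times\Omega$. Fix $\varphi \in \mathcal{B}_c(J\times\Omega)$; I want $\varphi U \in \mathfrak{S}$. Choose cutoffs $\psi, \chi \in \mathcal{B}_c(J\times\Omega)$ with $\psi \equiv 1$ near $\mathrm{supp}\,\varphi$ and $\chi\equiv 1$ near $\mathrm{supp}\,\psi$. Then $\psi U$ has compact support, and
\[
(\partial_t + L)(\psi U) = \psi F + [\,\partial_t + L,\ \psi\,]U =: G,
\]
where the commutator $[\partial_t+L,\psi]$ is a first-order operator in space (and zeroth order in time) with $\mathcal{B}$-coefficients supported in $\mathrm{supp}\,\chi \setminus \{\psi\equiv1\}$. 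Hence $\psi U = p^L \star G$ (using that $p^L$ is the fundamental solution and $\psi U$ is compactly supported in time to the right of the singularity — here one has to be slightly careful about the time direction and a shift, exactly as in \cite{hypoellipticity}). Now $\varphi \cdot (p^L \star G)$ splits into a piece where $G = \psi F \in \mathfrak{S}$ and a piece coming from the commutator term. For the first piece, convolution of $p^L$ (which lies in the relevant smooth space) with a compactly supported function in $\mathfrak{S}$ again lies in $\mathfrak{S}$ by the mapping properties recorded in Section 2.4, items (iv)--(vi), together with the identifications $\mathcal{S}^\infty_L = \mathcal{S}^\infty_\Delta$ from Lemma \ref{comparablelemma}; this is where the special-projective-basis hypothesis and the form-comparability are used to get the $\mathcal{C}_X$- and $\mathcal{S}_L$-statements for convolution on the right.

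The commutator piece is the genuine obstacle and is handled by a bootstrap on smoothness order exactly as in Kusuoka--Stroock. The term $[\partial_t+L,\psi]U$ involves at most \emph{one} spatial derivative of $U$ but is multiplied by a cutoff supported in the region where $\chi\equiv1$; iterating the identity with a nested sequence of cutoffs $\varphi \prec \psi_1 \prec \psi_2 \prec \cdots \prec \chi$ and using that each application of $p^L\star$ against a term that is ``$F$-like plus lower commutator'' improves the number of derivatives that can be controlled, one shows by induction on $N$ (and on $p$ for the time derivatives) that $\varphi U$ lies in $\mathfrak{S}^{N,p}$ for every $N,p$, hence in $\mathfrak{S}^\infty$. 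The delicate points, which I expect to occupy most of the work, are: (1) making the convolution $p^L \star G$ rigorous when $G$ is only a distribution, justified via the compact support of $\psi U$, the CK$*$-driven bounds on $p^L$ near $t=0^+$, and an approximation-of-identity argument (Section 2.4(iii)); (2) tracking that the commutator really does drop order, which requires knowing how left-invariant vector fields interact with multiplication by $\mathcal{B}$-functions and, for the right-convolution estimates, the special-basis trick with right-invariant fields $\breve X_i$; and (3) verifying the equivalence of the mixed $\mathcal{T}_{\Delta,L}$-norms under change of projective basis (Remark \ref{normequiv}, proved in the Appendix) so that the three target spaces $\mathcal{T}^{N,p}_\Delta$, $\mathcal{S}^{N,p}_L=\mathcal{S}^{N,p}_\Delta$, $\mathcal{C}^{N,p}_X$ can all be reached from the same fundamental-solution estimate. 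Finally, Remark \ref{hypoequiv} lets one pass freely between the $\mathcal{S}^{N,p}_L(I\times G)$ formulation and the $C^p(I\to \mathcal{S}^N_L(G))$ formulation, completing the proof.
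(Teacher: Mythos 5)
Your high-level strategy (localize with cutoffs, use the heat kernel of $L$ as a fundamental solution, split into a term driven by $\psi F$ and a commutator term) is indeed the Kusuoka--Stroock skeleton that the paper follows, and you correctly identify the needed inputs (Theorem \ref{Tspacethm}, Proposition \ref{prop1}, the special-basis trick with $\breve{X}_i$, Lemma \ref{comparablelemma}). But there are two genuine gaps where your proposal diverges from what actually makes the argument work. First, the identity $\psi U = p^L\star G$ cannot be taken as a starting point: the differential operators are left-invariant and therefore fall on the rightmost factor of a convolution, so it is \emph{not} known that right-convolution with an approximate identity such as $\rho_\tau\mu^L$ converges in $\mathcal{T}'_{\Delta,L}$ (the paper remarks on exactly this obstruction). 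This is precisely why the proof introduces a two-parameter mollification $\widetilde{U}_{\alpha,\tau}=(\rho_\alpha\mu^\Delta)\star(\eta U)\star(\rho_\tau\mu^L)$, with a \emph{central} factor on the left whose commutation with convolution rescues the limit $\alpha\to 0$, while the limit $\tau\to 0$ is obtained not by any weak convergence of $W\star\rho_\tau\mu^L$ but by a uniform bound on the $\tau$-derivative, $\sup_{0<\tau\le c_0}M^{N,p}_\Delta(I_0\times\Omega_0,\partial_\tau\widetilde{U}_{0,\tau})<\infty$ (Proposition \ref{prop2}). You flag the convolution as ``delicate'' but supply no mechanism for it, and without one the identity your whole argument rests on is unjustified.

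Second, your treatment of the commutator term uses the wrong mechanism. You propose an induction on the smoothness order, claiming each application of $p^L\star$ ``improves the number of derivatives that can be controlled.'' The paper does no such bootstrap: the commutator $\widetilde{V}=\eta LU - L(\eta U)$ is supported \emph{away} from a neighborhood of $\mathrm{supp}\,\psi$, so when it is tested against $\bar{\rho}_\tau(s-\cdot)\mathcal{L}_{x^{-1}}\mu^L_{s-\cdot}$ with $(s,x)$ near $\mathrm{supp}\,\psi$, the kernel $\mu^L_t$ is only ever evaluated off-diagonal, where Proposition \ref{prop1} (driven by Property (CK$*$)) gives decay faster than any power of $t$ in every $M^{N}_{\Delta,L}$-norm; this controls all orders $N,p$ in a single pass, with no induction. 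It is not clear how your proposed one-derivative-per-step gain would even be formulated for a distribution of unknown negative order in this infinite-dimensional setting, whereas the support-separation-plus-Gaussian-decay argument is the point of proving Proposition \ref{prop1} in the first place.
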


\begin{remark}
Together with Remark \ref{hypoequiv}, this gives the fact that $L$\ is $\mathcal{T}'_{\Delta,L}(I\times G)-C(I\times G)-$hypoelliptic, and more generally, $L$\ is $\mathcal{T}'_{\Delta,L}(I\times G)-C^p(I\rightarrow \mathcal{C}^N_X(G))-$hypoelliptic for $N,p\in \mathbb{N}\cup \{\infty\}$.
\end{remark}
\begin{remark}
Note that in Theorem \ref{mainthm}, the only $\mathcal{T}$-type space we consider in hypoellipticity is the $\mathcal{T}_\Delta$\ space, our proof does not treat the $\mathcal{T}_L$\ and $\mathcal{T}_{\Delta,L}$\ cases.
\end{remark}

Recall that given $\Delta=-\sum X_i^2$, all other sub-Laplacians on $G$\ are of the form $L_A=-\sum a_{ij}X_iX_j$\ for some matrix $A=(a_{ij})$. The next example provides a simple well-known condition on the matrix $A$\ such that $L_A$\ is a form-comparable perturbation of $\Delta$\ in the sense of (\ref{comparableforms}).
\begin{example} For any number $0<\epsilon<1$, we say that a matrix $A=(a_{ij})_{i,j\in \mathcal{I}}$\ is $\epsilon-$diagonally dominant, if for any $i\in \mathcal{I}$,
\begin{eqnarray*}
\epsilon|a_{ii}|>\sum_{j\neq i}|a_{ij}|.
\end{eqnarray*}
Let $(G,\nu)$\ be a compact connected group and $\Delta=-\sum_{i\in \mathcal{I}}X_i^2$\ be a bi-invariant Laplacian on $G$\ as before. Let $L_A:=-\sum_{i,j\in \mathcal{I}}a_{ij}X_iX_j$\ be a Laplacian on $G$\ where $A=(a_{ij})$\ is an $\epsilon-$diagonally dominant matrix for some $0<\epsilon<1$. Then $L_A$\ is a form-comparable perturbation of $\Delta$. Theorem \ref{mainthm} applies if $(\mu_t^\Delta)_{t>0}$\ satisfies Property (CK$*$).
\end{example}

\section{Proof of Theorem \ref{mainthm} - properties of the perturbed heat kernel}
In this section we prove that certain regularity properties of the heat kernel are preserved by form-comparable perturbations of bi-invariant Laplacians. These results are useful in the proof of Theorem \ref{mainthm} and are interesting by themselves. For simplicity we write $\mathcal{T}_{\Delta,L}$\ for $\mathcal{T}^\infty_{\Delta,L}$.
\begin{theorem}
\label{Tspacethm}
Let $G$\ be a compact connected metrizable group. Let $(\mu_t^\Delta)_{t>0}$\ be a symmetric central Gaussian semigroup on $G$\ with generator $-\Delta$. Assume that $(\mu_t^\Delta)_{t>0}$\ admits a continuous density function, denoted again by $\mu_t^\Delta$. Then for any left-invariant Laplacian $L$\ that is form-comparable to $\Delta$, its corresponding semigroup $(\mu_t^L)_{t>0}$\ admits a continuous density function (denoted again by $\mu_t^L$), and the density functions $\mu_t^\Delta$\ and $\mu_t^L$\ both belong to $C^\infty((0,\infty)\rightarrow \mathcal{T}_{\Delta,L}(G))$.
\end{theorem}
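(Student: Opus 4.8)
The plan is to first establish the result for the bi-invariant Laplacian $\Delta$ itself (or for the relevant parts of it), and then transfer it to the form-comparable perturbation $L$ via the factorization $\mu_t^L = \mu_t^{L-\beta\Delta} * \mu_t^{\beta\Delta}$ already exploited in Remark~\ref{CKremark}. For the bi-invariant case, the key structural fact is that $\Delta$ commutes with every left-invariant vector field $X_i$, so that $P_\Delta^{l,\lambda}\mu_s^\Delta = X_{l_1}\cdots X_{l_k}\Delta^{\lambda_0+\cdots+\lambda_k}\mu_s^\Delta$, and more generally the mixed derivatives appearing in the $\mathcal{T}_{\Delta,L}$-norm \eqref{mixednorm} reduce to quantities of the form $X^l L^{j}\Delta^{b}\partial_t^a\mu_s^\Delta$. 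Since $\mu_s^\Delta$ is a continuous density, and the semigroup property lets us write $\mu_t^\Delta = \mu_{t/2}^\Delta * \mu_{t/2}^\Delta$, one can move all the $X$-derivatives onto one factor and all the $\Delta$- and $\partial_t$-derivatives (which are the same thing on the semigroup, since $\partial_t\mu_t^\Delta = -\Delta\mu_t^\Delta$) onto the other; boundedness of each factor in the respective norm, combined with the spectral calculus bound $\|\Delta^m H_t^\Delta\|_{L^2\to L^2} \lesssim (m/t)^m$ (or the analyticity of the semigroup on $L^\infty$), gives that $\mu_s^\Delta \in \mathcal{T}_\Delta^\infty(G)$ for each $s>0$, with norms locally bounded in $s$. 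The $C^\infty$-dependence on $s$ then follows from differentiating under the convolution and using property (iii) of Section~\ref{convorderremark} (continuity of convolution and approximate identities) together with difference quotients $\frac{1}{h}(\mu_{s+h}^\Delta - \mu_s^\Delta) = \mu_s^\Delta * \frac{1}{h}(\mu_h^\Delta - \delta_e) \to -\Delta\mu_s^\Delta$ in the relevant space.

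Next I would pass to $L$. Write $\mu_t^L = \mu_t^{L-\beta\Delta} * \mu_t^{\beta\Delta}$ where $0<\beta$ is chosen so that $L-\beta\Delta$ still generates a symmetric Gaussian semigroup (possible by the lower form bound, as in Remark~\ref{CKremark}). The left factor $\mu_t^{L-\beta\Delta}$ is only a probability measure, but the right factor $\mu_t^{\beta\Delta}$ is, by the first part, a genuine function in $\mathcal{T}_{\Delta,L}(G)$ — here I need that $\mu_s^{\beta\Delta}\in\mathcal{T}_{\Delta,L}(G)$ and not merely $\mathcal{T}_\Delta(G)$, which requires controlling the extra $L$-derivatives in \eqref{mixednorm}; but $L=-\sum a_{ij}X_iX_j$ is a fixed finite-order (in the projective sense) combination of the $X_i$'s, and $\mathcal{S}^\infty_L = \mathcal{S}^\infty_\Delta$ by Lemma~\ref{comparablelemma}, so $L$-derivatives of $\mu_s^{\beta\Delta}$ are controlled by $X$-derivatives; iterating, the full mixed $\mathcal{T}_{\Delta,L}$-norm of $\mu_s^{\beta\Delta}$ is finite. (This is the step where Remark~\ref{normequiv} and the Appendix lemmas on norm equivalence get used.) Then property (ii) of Section~\ref{convorderremark} — convolution on the \emph{left} by an arbitrary Borel measure is bounded on each of these function spaces — gives $\mu_t^L = \mu_t^{L-\beta\Delta} * \mu_t^{\beta\Delta} \in \mathcal{T}_{\Delta,L}(G)$. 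Continuity of $\mu_t^L$ as a function is immediate since it is a continuous function convolved on the left by a measure. For the $C^\infty$-in-$t$ statement, I would again use difference quotients: $\partial_t\mu_t^L = -L\mu_t^L$ and the map $t\mapsto\mu_t^L$ into $\mathcal{T}_{\Delta,L}(G)$ is smooth because each $t$-derivative can be distributed, via the two-factor (or multi-factor) decomposition $\mu_t^L = \mu_{t/2}^L * \mu_{t/2}^{L-\beta\Delta}*\mu_{t/2}^{\beta\Delta}$ or similar, onto a factor where it is controlled, using analyticity of $(H_t^L)$ on $L^2$ (which holds since $-L$ is a nonnegative self-adjoint generator) to handle the $L^m$-norm blow-up.

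The main obstacle, I expect, is the "right-convolution direction" issue flagged at length in Section~\ref{convorderremark}: the left-invariant derivatives $X_i$ (and $L$) act on the rightmost factor of a convolution, so when one writes $\mu_t^L = \mu_t^{L-\beta\Delta}*\mu_t^{\beta\Delta}$ the derivatives land on $\mu_t^{\beta\Delta}$, which is good — but to get smoothness in $t$ and to distribute many derivatives one wants to also differentiate the \emph{left} factor, and convolution on the right is not automatically continuous on $\mathcal{T}_L$-type spaces. The escape is exactly the bi-invariance of $\Delta$: since $\Delta$ is bi-invariant, $\partial_t\mu_t^{\beta\Delta} = -\beta\Delta\mu_t^{\beta\Delta}$ can be rewritten using right-invariant vector fields (item (vi) of Section~\ref{convorderremark}: for the special projective basis $X$, $\mathcal{C}^\infty_X = \mathcal{RC}^\infty_X$, and $\Delta = -\sum \breve{X}_i^2$ as well), so that derivatives of $\mu_t^{\beta\Delta}$ sitting as a left factor can be converted into right-invariant derivatives that legitimately pass through the convolution onto $\mu_t^{\beta\Delta}$ again. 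Carefully bookkeeping which derivatives are left-invariant, which are right-invariant, and on which factor they act — and checking that the resulting estimates are uniform for $s$ in compact subsets of $(0,\infty)$ so that the inductive-limit topology on $\mathcal{T}_{\Delta,L}(G)$ is respected — is the technical heart; the rest is the routine difference-quotient argument for $C^\infty$-regularity in $t$, which I would only sketch, citing properties (ii)–(vi) of Section~\ref{convorderremark} and the Appendix.
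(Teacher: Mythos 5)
There is a genuine gap at the very first step, and it is precisely the difficulty this theorem is designed to address. You claim that because $\Delta$ commutes with every $X_i$, the mixed derivatives in the $\mathcal{T}_{\Delta,L}$-seminorm \eqref{mixednorm} ``reduce to quantities of the form $X^lL^j\Delta^b\partial_t^a\mu_s^\Delta$''. They do not: the seminorm involves $L^{\lambda_0}X_{l_1}L^{\lambda_1}\cdots X_{l_k}L^{\lambda_k}\Delta^b$ with the powers of $L$ \emph{interleaved} between the $X_{l_j}$, and $L$ is only left-invariant, so $L$ does not commute with the $X_i$ and the string cannot be reordered. For the same reason your two-factor splitting $\mu_t^\Delta=\mu_{t/2}^\Delta*\mu_{t/2}^\Delta$ cannot distribute the operators: left-invariant operators all land on the rightmost factor of a convolution, so writing $\mu_t^\Delta$ as a product of two central pieces puts the entire non-commuting string on a single factor and gains nothing. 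The paper's proof exists exactly to handle this interleaving: it writes $\mu_t^\Delta=\mu_t^{(1-\epsilon)\Delta}*\mu_t^{\epsilon\Delta-\alpha L}*\mu_t^{\alpha L}$ with $\epsilon=1/(2k+1)$ and $\alpha$ small enough that $\mathcal{E}_{\epsilon\Delta-\alpha L}\geq\delta\mathcal{E}_\Delta$, applies $L^{\lambda_k}$ to the $\mu_t^{\alpha L}$ factor (where it is a time derivative of that factor), applies $Y_{l_k}$ to a freshly peeled central factor $\mu_t^{\epsilon\Delta}$, and then uses centrality to rotate the differentiated block out of the way and repeat $k$ times, ending with a convolution of $k+1$ blocks each carrying exactly one $L^{\lambda_j}$ and one first-order derivative. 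One also needs the vector-valued Minkowski inequality $\|\mathbf{u}*\mathbf{v}\|_{\ell^2}\le\|\mathbf{u}\|_{\ell^2}*\|\mathbf{v}\|_{\ell^2}$ to sum the resulting bounds over $l\in\mathcal{I}^k$ (an infinite index set); your proposal is silent on this summation. The appeal to the $L^2$ spectral bound for $\Delta^mH_t^\Delta$ is also off-target, since the seminorms here are sup-norms, and the excursion into right-invariant vector fields is unnecessary for this theorem (centrality already lets one rearrange convolution factors at will); that device is needed later, in the hypoellipticity argument, not here.

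Your second half is essentially sound and in fact slightly cleaner than the paper's: once $\mu_t^{\beta\Delta}\in\mathcal{T}_{\Delta,L}(G)$ is known, the factorization $\mu_t^L=\mu_t^{L-\beta\Delta}*\mu_t^{\beta\Delta}$ together with the bound $M^{N}_{\Delta,L}(\mu*f)\le|\mu|\,M^{N}_{\Delta,L}(f)$ for left convolution by a measure (the analogue for the mixed space of property (ii) of Section \ref{convorderremark}) does give $\mu_t^L\in\mathcal{T}_{\Delta,L}(G)$, and the $t$-regularity follows by distributing time derivatives as you describe. But this step consumes the first half as input, so the gap there is fatal to the argument as written.
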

Observe that this theorem implies that for any bump function $b(t)$, say $b(t)\in C^\infty_c((0,1))$, the product functions $b(t)\mu_t^\Delta$\ and $b(t)\mu_t^L$\ belong to $\mathcal{T}_{\Delta,L}((0,1)\times G)$.
\begin{proof} We first show that $\mu_t^\Delta\in C^\infty((0,\infty)\rightarrow \mathcal{T}_{\Delta,L}(G))$. For convenience, we sometimes write the density function $\mu_t^\Delta(x)$\ as $\mu^\Delta(t,x)$. Let $\{X_i\}_{i\in \mathcal{I}}$, $\{Y_i\}_{i\in \mathcal{I}}$\ be two projective bases such that $\Delta=-\sum X_i^2$, $L=-\sum Y_i^2$. By \cite{Gaussian}, $\mu_t^\Delta\in C^\infty((0,\infty)\rightarrow\mathcal{T}_\Delta(G))$. Because the seminorms of $\mathcal{T}_{\Delta,L}$\ can be given using either $\{X_i\}$\ or $\{Y_i\}$\ (see Remark \ref{normequiv}), for simplicity of notations we use $\{Y_i\}$\ here, and because $\partial_t\mu_t^\Delta=-\Delta\mu_t^\Delta$\ in $C(G)$, to show that $\mu_t^\Delta\in C^\infty((0,\infty)\rightarrow \mathcal{T}_{\Delta,L}(G))$, it suffices to show that for any $N,p\in \mathbb{N}$, for any $t>0$,
\begin{eqnarray*}
M^{N}_L(\partial_t^p\mu^\Delta_t)=\sup_{\substack{k,m\in \mathbb{N}\\k+2m\leq N}}\sup_{x\in G}\sup_{\lambda\in \Lambda(k,m)}\left(\sum_{l\in \mathcal{I}^k}|P_L^{l,\lambda}\partial_t^p\mu_t^\Delta(x)|^2\right)^{1/2}<\infty.
\end{eqnarray*}

We now compute $P_L^{l,\lambda}\mu_t^\Delta(x)=L^{\lambda_0}Y_{l_1}L^{\lambda_1}\cdots Y_{l_k}L^{\lambda_k}\mu_t^\Delta(x)$. First, let $\epsilon:=\frac{1}{2k+1}$. For any fixed $\alpha$\ with $0<\alpha<\epsilon C^{-1}=\frac{1}{(2k+1)C}$\ ($C$\ as in $\mathcal{E}_L\leq C\mathcal{E}_{\Delta}$), we have $\mathcal{E}_{\epsilon\Delta-\alpha L}\geq \delta \mathcal{E}_\Delta$\ for some small $\delta>0$. Thus as $\Delta$\ commutes with $L$, $\mu_t^\Delta$\ can be decomposed as
\begin{eqnarray*}
\mu_t^\Delta=\mu_t^{(1-\epsilon)\Delta}*\mu_t^{\epsilon\Delta-\alpha L}*\mu_t^{\alpha L}.
\end{eqnarray*}
Because $\mu_t^\Delta$\ commutes with any function in convolution,
\begin{eqnarray*}
\lefteqn{Y_{l_k}L^{\lambda_k}\mu_t^\Delta=Y_{l_k}\left(\mu_t^{(1-\epsilon)\Delta}*\mu_t^{\epsilon\Delta-\alpha L}*L^{\lambda_k}\mu_t^{\alpha L}\right)}\\
&=&Y_{l_k}\left(\mu_t^{(1-2\epsilon)\Delta}*\mu_t^{\epsilon\Delta-\alpha L}*L^{\lambda_k}\mu_t^{\alpha L}*\mu_t^{\epsilon\Delta}\right)\\
&=&\mu_t^{(1-2\epsilon)\Delta}*\mu_t^{\epsilon\Delta-\alpha L}*L^{\lambda_k}\mu_t^{\alpha L}*Y_{l_k}\mu_t^{\epsilon\Delta}.
\end{eqnarray*}
So
\begin{eqnarray*}
\lefteqn{P_L^{l,\lambda}\mu_t^\Delta(x)=L^{\lambda_0}Y_{l_1}L^{\lambda_1}\cdots Y_{l_k}L^{\lambda_k}\mu_t^\Delta(x)}\\
&=&L^{\lambda_0}Y_{l_1}L^{\lambda_1}\cdots Y_{l_{k-1}}L^{\lambda_{k-1}}\left(\mu_t^{(1-2\epsilon)\Delta}*\mu_t^{\epsilon\Delta-\alpha L}*L^{\lambda_k}\mu_t^{\alpha L}*Y_{l_k}\mu_t^{\epsilon\Delta}\right)(x)\\
&=&\mu_t^{\epsilon\Delta-\alpha L}*L^{\lambda_k}\mu_t^{\alpha L}*Y_{l_k}\mu_t^{\epsilon\Delta}*\left(L^{\lambda_0}Y_{l_1}L^{\lambda_1}\cdots Y_{l_{k-1}}L^{\lambda_{k-1}}\mu_t^{(1-2\epsilon)\Delta}\right)(x).
\end{eqnarray*}
Repeating this decomposition process $k$\ times, we get
\begin{eqnarray}
\lefteqn{P_L^{l,\lambda}\mu_t^\Delta(x)
=\left(\mu_t^{\epsilon\Delta-\alpha L}*L^{\lambda_k}\mu_t^{\alpha L}*Y_{l_k}\mu_t^{\epsilon\Delta}\right)*\cdots}\notag\\
&&*\left(\mu_t^{\epsilon\Delta-\alpha L}*L^{\lambda_1}\mu_t^{\alpha L}*Y_{l_1}\mu_t^{\epsilon\Delta}\right)*\mu_t^{\epsilon \Delta-\alpha L}*L^{\lambda_0}\mu_t^{\alpha L}(x).\label{decomposition}
\end{eqnarray}

To estimate
\begin{eqnarray*}
\left(\sum_{l\in \mathcal{I}^k}|P_L^{l,\lambda}\mu_t^\Delta(x)|^2\right)^{1/2}=||(P_L^{\cdot,\lambda}\mu_t^\Delta(x))_\cdot||_{l^2},
\end{eqnarray*}
i.e., the $l^2$\ norm of the $\mathcal{I}^k$-indexed vector $(P_L^{\cdot,\lambda}\mu_t^\Delta(x))_\cdot$, we use the following bound: for any $l^2$\ vector functions ${\bf u}(x)=(u_i(x))_i$, ${\bf v}(x)=(v_j(x))_j$, where each $u_i,v_j\in C(G)$, 
\begin{eqnarray}
\label{minkowskibound}
||{\bf u}*{\bf v}(x)||_{l^2}\leq (||{\bf u}||_{l^2}*||{\bf v}||_{l^2})(x).
\end{eqnarray}
To see this, we use the Minkowski inequality twice as follows
\begin{eqnarray*}
\lefteqn{\left|\left|{\bf u}*{\bf v}(x)\right|\right|^2_{l^2}= \sum_i\left(\sum_j\left|u_i*v_j(x)\right|^2\right)}\\
&=&\sum_i\left|\left|u_i*{\bf v}(x)\right|\right|^2_{l^2}\leq \sum_i \left(|u_i|*\left|\left|{\bf v}\right|\right|_{l^2}(x)\right)^2\leq \left(||{\bf u}||_{l^2}*||{\bf v}||_{l^2}(x)\right)^2.
\end{eqnarray*}
Repeatedly applying (\ref{minkowskibound}), we get
\begin{eqnarray}
\lefteqn{||(P_L^{\cdot,\lambda}\mu_t^\Delta(x))_\cdot||_{l^2}}\notag\\
&\leq& \mu_t^{\epsilon\Delta-\alpha L}*|L^{\lambda_k}\mu_t^{\alpha L}|*||(Y_{l_k}\mu_t^{\epsilon\Delta})_{l_k}||_{l^2}*\cdots*\mu_t^{\epsilon\Delta-\alpha L}*|L^{\lambda_1}\mu_t^{\alpha L}|\notag\\
&&*||(Y_{l_1}\mu_t^{\epsilon\Delta})_{l_1}||_{l^2}*\mu_t^{\epsilon \Delta-\alpha L}*|L^{\lambda_0}\mu_t^{\alpha L}|(x),\label{normdecomposition}
\end{eqnarray}
where each component is a continuous function on $(0,\infty)\times G$. The same is true if we replace $\mu_t^{\epsilon\Delta-\alpha L}$, $\mu_t^{\alpha L}$, $\mu_t^{\epsilon \Delta}$\ in (\ref{normdecomposition}) by any of their time derivatives. Hence $M^{N}_L(\partial_t^p\mu_t^\Delta)=M^{N}_L(G,\,\partial_t^p\mu_t^\Delta)<\infty$\ for any $N,p\in \mathbb{N}$\ and any $t>0$, and $\mu_t^\Delta$\ belongs to $C^\infty((0,\infty)\rightarrow \mathcal{T}_{\Delta,L}(G))$.

Finally, by decomposing the semigroup $(\mu_t^L)_{t>0}$\ into convolutions similarly as above we get that $(\mu_t^L)_{t>0}$\ admits a continuous density function which further belongs to $C^\infty((0,\infty)\rightarrow\mathcal{T}_{\Delta,L}(G))$. For example, for any $k,n\in \mathbb{N}$, similar to (\ref{decomposition}), we can fix some $0<\beta<c/2$\ ($c$\ as in $c\mathcal{E}_\Delta\leq \mathcal{E}_L$) and decompose as follows.
\begin{eqnarray}
\lefteqn{\hspace{-.3in}L^{\lambda_0}Y_{l_1}L^{\lambda_1}\cdots Y_{l_k}L^{\lambda_k}\Delta^{n}\mu_t^L=L^{\lambda_0}Y_{l_1}L^{\lambda_1}\cdots Y_{l_k}\left(\Delta^n\mu_t^{\beta \Delta}*\mu_t^{\frac{1}{2}L-\beta\Delta}*L^{\lambda_k}\mu_t^{\frac{1}{2}L}\right)}\notag\\
&\hspace{-.4in}=& \hspace{-.2in} \Delta^n\mu_t^{\frac{1}{2}\beta\Delta}*\mu_t^{\frac{1}{2}L-\beta\Delta}*L^{\lambda_k}\mu_t^{\frac{1}{2}L}*L^{\lambda_0}Y_{l_1}L^{\lambda_1}\cdots Y_{l_{k-1}}L^{\lambda_{k-1}}\mu_t^{\frac{1}{2}\beta\Delta}. \label{decomposition2}
\end{eqnarray}
The last term is then in the form of (\ref{decomposition}). We can thus bound the terms $\left(\sum_{l\in \mathcal{I}^k}|P_L^{l,\lambda}\Delta^n\partial_t^p\mu_t^L(x)|^2\right)^{1/2}$\ as was done for $\mu_t^\Delta$.
\end{proof}
Next we show that $\mu_t^L$\ satisfies similar off-diagonal Gaussian estimates as $\mu_t^\Delta$\ does, the latter's estimates are obtained in \cite{Gaussian}.
\begin{proposition}
\label{prop1}
Let $G$\ be a compact connected metrizable group. Let $(\mu_t^\Delta)_{t>0}$\ be a symmetric central Gaussian semigroup on $G$\ with generator $-\Delta$. Suppose $(\mu_t^\Delta)_{t>0}$\ satisfies Property (CK$*$). Let $L$\ be any form-comparable perturbation of $\Delta$. Then for any compact set $K\subset G$\ with $e\notin K$, for any $T>0$, $N\in \mathbb{N}$, $\sigma\geq 0$, $A,\alpha>0$,
\begin{eqnarray}
\label{offdiagonal}
\sup_{0<t<T}\frac{e^{AM_L(\alpha t)}}{t^\sigma}M^{N}_{\Delta,L}(K,\,\mu^L_t)<+\infty.
\end{eqnarray}
Here $M_L(t):=\log{\mu_t^{L}(e)}$.
\end{proposition}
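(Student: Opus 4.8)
The plan is to reduce the off-diagonal estimate for $\mu_t^L$ on the compact set $K$ (bounded away from $e$) to the corresponding off-diagonal estimate for $\mu_t^\Delta$ proved in \cite{Gaussian}, using the same convolution-decomposition technique employed in the proof of Theorem \ref{Tspacethm}, together with the elementary inequality (\ref{minkowskibound}) for $l^2$-valued convolutions. The key point is that the Gaussian estimates of \cite{Gaussian} control $M_\Delta^N(\mu_t^\Delta)$ away from $e$ with a factor $e^{-A' M_\Delta(\alpha' t)}$, and that the (CK$*$) condition $\lim_{t\to 0}t\log\mu_t^\Delta(e)=0$ ensures $M_\Delta(t)=\log\mu_t^\Delta(e)=o(1/t)$, so such exponential factors absorb any polynomial loss $t^{-\sigma}$; and by Remark \ref{CKremark}, $\mu_t^L$ also satisfies (CK$*$), with $M_L(t)\le M_{\beta\Delta}(t)$ comparable to $M_\Delta(\beta t)$.

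First I would fix the data $K$, $T$, $N$, $\sigma$, $A$, $\alpha$, and expand $M^{N}_{\Delta,L}(K,\mu_t^L)$ as a supremum over $k,m,b$ with $2b+k+2m\le N$ and $\lambda\in\Lambda(k,m)$ of the quantities $\bigl(\sum_{l\in\mathcal{I}^k}|L^{\lambda_0}Y_{l_1}L^{\lambda_1}\cdots Y_{l_k}L^{\lambda_k}\Delta^b\mu_t^L(x)|^2\bigr)^{1/2}$, with $x$ ranging over $K$. Next I would decompose $\mu_t^L$: pick $\beta$ with $0<\beta<c/(2(2k+1))$ so that $\mathcal{E}_{\beta\Delta-\beta' L}\ge\delta\mathcal{E}_\Delta$ stays nonnegative for a suitable small $\beta'>0$, and write $\mu_t^L=\mu_t^{L-(2k+1)\beta' L}*\bigl(\mu_t^{\beta'L}\bigr)^{*(2k+1)}$, or more conveniently a mixed decomposition $\mu_t^L=\mu_t^{(1-\cdots)L}*\prod^{*}\bigl(\mu_t^{\beta\Delta-\beta'L}*\mu_t^{\beta'L}*\mu_t^{\cdot\Delta}\bigr)$ exactly paralleling (\ref{decomposition})--(\ref{decomposition2}), so that each left-invariant derivative $Y_{l_j}$ and each bi-invariant $\Delta^b$ or $L^{\lambda_j}$ lands on a separate factor while at least one factor of the form $\mu_s^{c'\Delta}$ (pure central, small multiple) survives unhit and is supported — after one genuine off-diagonal step — away from $e$. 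Since all the $\mu^{\alpha L},\mu^{\alpha\Delta},\mu^{\beta\Delta-\beta'L}$ factors and their space/time derivatives are continuous functions with $L^1$ norms controlled by their total mass (item (ii) of Section \ref{convorderremark}), applying (\ref{minkowskibound}) repeatedly bounds the $l^2$-norm above by a convolution of scalar functions, in which the relevant ``large'' factor is $M^{N}_{\Delta}$-type data of $\mu_t^{c'\Delta}$ evaluated on a slightly enlarged compact set still excluding $e$.

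Then I would invoke the off-diagonal Gaussian bound for the central semigroup from \cite{Gaussian}: for the compact set $K'\not\ni e$ arising after the decomposition, $M^{N}_\Delta(K',\mu_t^{c'\Delta})\le C' e^{-A' M_\Delta(\alpha' t)}$ for every prescribed $A',\alpha'$, up to a $t$-independent constant. Choosing the constants $A',\alpha'$ large enough relative to $A,\alpha$ and to the (finitely many) total-mass factors $\le 1$ accumulated from the other convolution factors, and using $M_\Delta(\alpha' t)\ge c'' M_L(\alpha t)$ (valid since $\mathcal{E}_{\beta\Delta}\le\mathcal{E}_L$ gives $\mu_t^L(e)\le\mu_t^{\beta\Delta}(e)$, whence $M_L(\alpha t)\le M_{\beta\Delta}(\alpha t)$ and the latter is comparable to $M_\Delta$ at a rescaled time), one gets $e^{A M_L(\alpha t)} M^{N}_{\Delta,L}(K,\mu_t^L)\le C'' e^{A M_L(\alpha t) - A' M_\Delta(\alpha' t)}\le C''$ uniformly on $0<t<T$; and the polynomial factor $t^{-\sigma}$ is harmless because, by (CK$*$), $M_\Delta(\alpha' t)/(\sigma\log(1/t))\to+\infty$ as $t\to 0$ whenever $M_\Delta(\alpha' t)\to+\infty$ — while on any interval where $\mu_t^\Delta(e)$ stays bounded the whole expression is trivially bounded. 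I expect the main obstacle to be bookkeeping: verifying that the mixed decomposition can be arranged so that every one of the $k$ left-invariant factors $Y_{l_j}$, the $b$ bi-invariant factors $\Delta$, and the $m+k+1$ copies of $L^{\lambda_j}$ each hit a distinct (central or perturbed) convolution factor while keeping the exponents positive and bounded below uniformly in $k,m,b\le N$, and that the single surviving pure-central factor genuinely carries the off-diagonal decay on a compact set separated from $e$ — this is where the commutativity of $\Delta$ with everything, and the form-comparability $c\mathcal{E}_\Delta\le\mathcal{E}_L\le C\mathcal{E}_\Delta$, must be used carefully, as in the passage from (\ref{decomposition}) to (\ref{decomposition2}).
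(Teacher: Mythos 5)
Your overall skeleton---decompose $\mu_t^L$ into a convolution of factors so that each differential operator lands on its own factor, apply the $l^2$-Minkowski bound (\ref{minkowskibound}), and feed in the off-diagonal Gaussian estimate of \cite{Gaussian} together with (CK$*$) to absorb the $t^{-\sigma}$ and $e^{AM_L(\alpha t)}$ prefactors---is the same as the paper's. The gap is in the step where you control all the non-central factors ``by their total mass'' and let a single pure-central factor $\mu_t^{c'\Delta}$ carry the off-diagonal decay ``on a slightly enlarged compact set still excluding $e$.'' A convolution on a group does not localize this way. Already for $N=0$: writing $\mu_t^L(x)=\int_G \mu_t^{L-\beta\Delta}(y)\,\mu_t^{\beta\Delta}(y^{-1}x)\,d\nu(y)$ with $x\in K$, the portion of the integral over $y$ in a small neighborhood $V$ of $e$ is indeed bounded by $\sup_{K'}\mu_t^{\beta\Delta}$ times a total mass $\leq 1$; but over the complementary region the argument $y^{-1}x$ of the central factor sweeps through $e$ (take $y=x$), so that factor is only bounded by $\mu_t^{\beta\Delta}(e)=e^{M_{\beta\Delta}(t)}=e^{o(1/t)}$, and the total-mass bound $\mu_t^{L-\beta\Delta}(V^c)\leq 1$ (or even the qualitative Gaussian property $\mu_t^{L-\beta\Delta}(V^c)=o(t)$) gives nothing against that blow-up.

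To make the split work you need \emph{every} factor in the convolution---in particular the non-central ones $\mu_t^{\theta L-\epsilon\Delta}$ and $L^{\lambda_j}\mu_t^{\alpha L}$ that your own decomposition produces---to be small away from $e$, with smallness of order $\exp\{M(t)-c/t\}$ so as to beat the on-diagonal growth of the remaining factors as well as $e^{AM_L(\alpha t)}t^{-\sigma}$. This off-diagonal input for \emph{non-central} semigroups is not contained in \cite{Gaussian}, which you cite and which treats the central case only; the paper obtains it from the general Gaussian upper bound $\mu_t^{L}(x)\leq \exp\{M_L(t)-c_L(\psi(x)-\psi(e))^2/t\}$ of \cite{orange} (using that $\mu^L$ and $\mu^{\theta L-\epsilon\Delta}$ inherit Property (CK$*$) via Remark \ref{CKremark}) and from \cite{nonGaussian} for the terms $L^a\mu_t^L=(-\partial_t)^a\mu_t^L$, and then proves a short lemma stating that off-diagonal smallness is stable under convolution, via a finite cover of $K$ and the two-region split of the integral sketched above. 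That lemma is exactly what your ``one genuine off-diagonal step'' is implicitly assuming; the real missing ingredient is therefore not the bookkeeping of exponents you flag at the end, but the off-diagonal estimates for the perturbed (non-central) factors themselves.
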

\begin{proof}
As in (\ref{decomposition2})(\ref{normdecomposition}) of the previous theorem, the terms
\begin{eqnarray*}
\left(\sum_{l\in \mathcal{I}^k} |P_{L}^{l,\lambda}\Delta^n\mu_t^L(x)|^2\right)^{1/2}=||(P_{L}^{\cdot,\lambda}\Delta^n\mu_t^L(x))_\cdot||_{l^2}
\end{eqnarray*}
are bounded above by convolutions of terms of the form $|D_x^1\Delta^n\mu_t^\Delta|_L$\ (equivalent to $|D_x^1\Delta^n\mu_t^\Delta|_\Delta$), $|L^{|\lambda|}\mu_t^L(x)|$, and $\mu_t^{\theta L-\epsilon \Delta}(x)$, where $\theta,\epsilon\in \mathbb{R}$\ are proper numbers such that $\mathcal{E}_{\theta L-\epsilon \Delta}$\ is comparable with $\mathcal{E}_\Delta$.
The terms $\mu_t^\Delta$, $\mu_t^L$, $\mu_t^{\theta L-\epsilon\Delta}$\ satisfy the following estimates.
\begin{itemize}
    \item[(i)] By \cite{Gaussian}, $\mu_t^\Delta$\ satisfies that for any compact set $K\subset G$\ with $e\notin K$, for any $T>0$, $N\in \mathbb{N}$, $\sigma\geq 0$, $A,\alpha>0$,
\begin{eqnarray}
\label{gaussian1}
\sup_{0<t<T}\frac{e^{AM_\Delta(\alpha t)}}{t^\sigma}M^{N}_\Delta(K,\,\mu^\Delta_t)<+\infty,
\end{eqnarray}
where $M_\Delta(t):=\log{\mu_t^{\Delta}(e)}$.
\item[(ii)] For the term $\mu_t^{L}$, by Remark \ref{CKremark}, $\mu_t^{L}$\ satisfies Property (CK$*$) and $\mu_t^L(e)\leq \mu_t^{\beta\Delta}(e)$\ for some $\beta>0$. Then
\begin{eqnarray*}
M_L(t)=\log{\mu_t^{L}(e)}\leq \log{\mu_t^{\beta\Delta}(e)}=o\left(\frac{1}{t}\right).
\end{eqnarray*}
By \cite{orange}, for any $\psi\in \mathcal{B}(G)$\ with $|D_x^1\psi|_L\leq 1$, $|L\psi|\leq 1$,
\begin{eqnarray*}
\mu_t^{L}(x)\leq \exp{\left\{M_L(t)-\frac{c_L(\psi(x)-\psi(e))^2}{t}\right\}}
\end{eqnarray*}
for some $c_L>0$. It follows that for any compact set $K$\ with $e\notin K$,
\begin{eqnarray*}
\lim\limits_{t\rightarrow 0}\sup\limits_{x\in K}\mu_t^L(x)=0.
\end{eqnarray*}
By \cite{nonGaussian}, $\mu_t^{L}$\ further satisfies that for any $a\in \mathbb{N}$,
\begin{eqnarray}
\label{gaussian2}
\lim_{t\rightarrow 0}\sup_{x\in K}|\partial_t^a\mu_t^{L}(x)|=\lim_{t\rightarrow 0}\sup_{x\in K}|L^a\mu_t^{L}(x)|=0.
\end{eqnarray}
\item[(iii)] Applying the same arguments to $\mu_t^{\theta L-\epsilon\Delta}$\ as to $\mu_t^L$\ (because $\mathcal{E}_{\theta L-\epsilon\Delta}$\ is comparable to $\mathcal{E}_\Delta$) gives
\begin{eqnarray}
\label{gaussian3}
\lim_{t\rightarrow 0}\sup_{x\in K}|\mu_t^{\theta L-\epsilon\Delta}(x)|=0.
\end{eqnarray}
\end{itemize}
By (\ref{gaussian1})(\ref{gaussian2})(\ref{gaussian3}), (\ref{offdiagonal}) follows from the following lemma.
\begin{lemma}
If two families of functions $(u_t)_{t>0}$\ and $(v_t)_{t>0}$\ both satisfy the following off-diagonal bound: for any compact set $K$\ with $e\notin K$,
\begin{eqnarray*}
\lim_{t\rightarrow 0}\sup_{x\in K}u_t(x)=\lim_{t\rightarrow 0}\sup_{x\in K}v_t(x)=0,
\end{eqnarray*}
then $u_t*v_t$\ also satisfies this off-diagonal bound.
\end{lemma}
We now prove the lemma. Let $V_1$\ be an open neighborhood of $e$\ with $\overline{V}_1\cap K=\emptyset$. For any $x\in K$, let $U_x$\ be a small open neighborhood of $x$\ that is away from $V_1$. Let $x(U_x^c)^{-1}=\{xy^{-1}\, |\, y\in U_x^c\}$, then $x(U_x^c)^{-1}$\ is away from $e$, i.e. $x(U_x^c)^{-1}\subset (U_e)^c$\ for some open neighborhood $U_e$\ of $e$. Let $V_x\Subset U_x$\ be a smaller open neighborhood of $x$. We may pick $U_e$\ such that for any $\tilde{x}\in V_x$, $\tilde{x}(U_x^c)^{-1}\subset (U_e)^c$. For any such $\tilde{x}$,
\begin{eqnarray}
\lefteqn{u_t*v_t(\tilde{x})= \int_G u_t(\tilde{x}y^{-1})v_t(y)\,d\nu(y)}\notag\\
&=& \int_{y\in (U_x)^c}u_t(\tilde{x}y^{-1})v_t(y)\,d\nu(y)+\int_{y\in U_x}u_t(\tilde{x}y^{-1})v_t(y)\,d\nu(y)\notag\\
&\leq& \sup_{a\in (U_e)^c}|u_t(a)|\cdot ||v_t||_\infty\cdot \nu(G)+||u_t||_\infty\cdot \sup_{y\in U_x}|v_t(y)|\cdot \nu(G). \label{gaussian4}
\end{eqnarray}
Because $K$\ is compact, using some finite cover $\{V_{x_n}\}_{1\leq n\leq N_0}$\ to cover $K$, the estimate $(\ref{gaussian4})$\ then implies that
\begin{eqnarray*}
\lim_{t\rightarrow 0}\sup_{x\in K}|u_t*v_t(x)|=0
\end{eqnarray*}
as desired.
\end{proof}
\begin{remark}
\label{gaussianremark}
As a consequence, for any $0<a<b<\infty$, for any $A,\alpha,T>0$, $\sigma\geq 0$, $N,p\in \mathbb{N}$, for any compact set $K$\ with $e\notin K$,
\begin{eqnarray*}
\sup_{0<\tau<T}\frac{\exp{\{AM_L(\alpha\tau)\}}}{\tau^\sigma}M^{N,p}_{\Delta,L}((a\tau,b\tau)\times K,\,\mu^L)<\infty.
\end{eqnarray*}
\end{remark}
\section{Proof of Theorem \ref{mainthm} - hypoellipticity}
We now use results in the previous section to prove hypoellipticity properties of the parabolic operator $\partial_t+L$. By definition of $\mathcal{T}_{\Delta,L}'-\mathfrak{S}-$hypoellipticity, for any $U\in \mathcal{T}'_{\Delta,L}(I\times G)$\ and $F\in \mathcal{B}'(I\times G)$\ such that $(\partial_t+L)U=F$, for any open subset $I'\times \Omega'\subset I\times G$\ such that $\psi F\in \mathfrak{S}$\ for any $\psi\in \mathcal{B}_c(I'\times \Omega')$, we need to show that $\psi U\in \mathfrak{S}$\ for any $\psi\in \mathcal{B}_c(I'\times \Omega')$.

Recall that in Theorem \ref{mainthm}, $\mathfrak{S}$\ can be $\mathcal{T}^{N,p}_\Delta$, $\mathcal{S}^{N,p}_L$, $\mathcal{C}_X^{N,p}$, where all spaces are on $I\times G$, and $X$\ is any special projective basis such that $\Delta=-\sum X_i^2$. We prove the theorem for the case
\begin{eqnarray*}
\mathfrak{S}=\mathcal{T}_\Delta^\infty(I\times G)=:\mathcal{T}_\Delta(I\times G),
\end{eqnarray*}
the proofs for other cases are very similar.

Fix any $\psi\in \mathcal{B}_c(I'\times \Omega')$. To show that $\psi U\in \mathcal{T}_{\Delta}(I\times G)$, we construct an approximation sequence in $\mathcal{T}_{\Delta}(I\times G)$.
Let $I_0\times \Omega_0$, $I_1\times \Omega_1$, $I_2\times \Omega_2$\ be open sets such that
\begin{eqnarray*}
\mbox{supp}\{\psi\}\subset I_0\times \Omega_0\Subset I_1\times \Omega_1\Subset I_2\times \Omega_2\Subset I'\times \Omega'.
\end{eqnarray*}
Pick some $\eta\in \mathcal{B}(I\times G)$\ with $\eta\equiv 1$\ on $I_1\times \Omega_1$, $\mbox{supp}\{\eta\}\subset I_2\times \Omega_2$. Fix some bump function $\rho\in C_c^\infty((1,2))$\ that satisfies $\rho\geq 0$\ and $\int_\mathbb{R}\rho(t)\,dt=1$. For any $\tau>0$, let $\rho_\tau(t):=\frac{1}{\tau}\rho\left(\frac{t}{\tau}\right)$. Then $\rho_\tau$\ is supported in $(\tau,2\tau)$. Let $c_0$\ be a positive number to be determined at Lemma \ref{convlemma}. For any $(\alpha,\tau)\in [0,1]\times [0,c_0]\setminus \{(0,0)\}$, define
\begin{eqnarray*}
\widetilde{U}_{\alpha,\tau}:=\begin{cases}
(\rho_\alpha\mu^\Delta)\star (\eta U)\star (\rho_\tau \mu^L),\ \mbox{when }(\alpha,\tau)\in (0,1]\times (0,c_0],\\[0.05in]
\widetilde{U}_{0,\tau}=(\eta U)\star (\rho_\tau \mu^L),\ \mbox{when }\tau>0,\,\alpha=0,\\[0.05in]
\widetilde{U}_{\alpha,0}=(\rho_\alpha\mu^\Delta)\star (\eta U),\ \mbox{when }\alpha>0,\,\tau=0.
\end{cases}
\end{eqnarray*}
Recall that we use $\star$\ to emphasize that the convolution is in time and space, see (\ref{convdef}). The two-parameter sequence $\{\psi \widetilde{U}_{\alpha,\tau}\}_{\alpha,\tau}$\ is our approximation sequence. The following lemma shows that $\psi\widetilde{U}_{\alpha,\tau}\in \mathcal{T}_{\Delta}(I\times G)$\ for all $(\alpha,\tau)\in [0,1]\times[0,c_0]\setminus \{(0,0)\}$.
\begin{lemma}
\label{convlemma}
For any $W\in \mathcal{T}'_{\Delta,L}(I\times G)$\ with compact support in $I\times G$, there exists some $0<c_0<1$, such that for any fixed $0<\tau\leq c_0$,
\begin{eqnarray*}
W_\tau:=W\star(\rho_\tau\mu^{L})\in \mathcal{T}_{\Delta}(I\times G).
\end{eqnarray*}
\end{lemma}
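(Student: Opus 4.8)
The plan is to estimate the $\mathcal{T}_\Delta$-seminorms of $W_\tau = W\star(\rho_\tau\mu^L)$ directly from the duality bound for $W$ and the smoothness of $b(t)\mu_t^L$ established in Theorem \ref{Tspacethm}. Recall that by hypothesis $W\in\mathcal{T}'_{\Delta,L}(I\times G)$ has compact support in $I\times G$, so there exist a precompact $J\Subset I$, a constant $C>0$, and $N,p\in\mathbb{N}$ with $|W(f)|\le CM^{N,p}_{\Delta,L}(J\times G,f)$ for all test functions; moreover, by the discussion following the definition of the $\mathcal{T}'$-convolution in Section 2, there is $\eta_0\in\mathcal{T}^\infty_{\Delta,L}(I\times G)$ with compact support such that $W=\eta_0 W$. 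The key observation is that $W_\tau$, being the convolution of a compactly supported distribution with the smooth (in $t$ and $x$) kernel $\rho_\tau(s)\mu^L_s$, should itself be a genuine function: formally $W_\tau(t,x)=W_{(s,y)}\big(\rho_\tau(t-s)\,\mu^L_{t-s}(y^{-1}x)\big)$, and one wants to differentiate under $W$.

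First I would make the pairing rigorous. For $\varphi\in\mathcal{T}^\infty_\Delta(I\times G)$ we have $W_\tau(\varphi)=W(\check{\rho_\tau\mu^L}\star\varphi)$ where $\check{(\rho_\tau\mu^L)}(s,x)=\rho_\tau(-s)\mu^L_{-s}(x^{-1})$; since $\rho_\tau$ is supported in $(\tau,2\tau)\subset(0,c_0)$ this is supported in $s\in(-2\tau,-\tau)$, a fixed compact time-set, so the argument $\check{\rho_\tau\mu^L}\star\varphi$ lands in $\mathcal{B}$-like functions on a slightly enlarged interval. Next, to identify $W_\tau$ with a function, I would show the map $(t,x)\mapsto (s,y)\mapsto \rho_\tau(t-s)\mu^L_{t-s}(y^{-1}x)$ takes values in $\mathcal{T}^\infty_{\Delta,L}$ (in the $(s,y)$ variables), with $\mathcal{T}_\Delta$-regular dependence on $(t,x)$: precisely, for each fixed $(t,x)$ this is a function of $(s,y)$ that equals $g(s)\,\mu^L_{\sigma}(z)$-type expressions, which by Theorem \ref{Tspacethm} (the remark that $b(t)\mu^L_t\in\mathcal{T}_{\Delta,L}((0,1)\times G)$, applied after translating so that the relevant time interval sits in $(0,1)$) lies in $\mathcal{T}^\infty_{\Delta,L}$ on the relevant compact time-window. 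This lets us define $W_\tau(t,x)$ pointwise and, applying $\partial_t^a X^l\Delta^b$ in the $(t,x)$ variables, move these derivatives onto $\rho_\tau(t-s)\mu^L_{t-s}(y^{-1}x)$; left-invariance sends the $X_{l_j},\Delta$ derivatives to derivatives in $y$ (or to right-invariant fields on the $x$-slot, then transferred — this is exactly the right-convolution trick invoked in Section \ref{convorderremark}(iv)(vi)), while $\partial_t$ distributes onto $\rho_\tau$ and onto $\mu^L$ via $\partial_s\mu^L_s=-L\mu^L_s$. The upshot is a bound
\begin{equation*}
\big|\partial_t^a X^l\Delta^b W_\tau(t,x)\big| \le C\, M^{N,p}_{\Delta,L}\!\Big(J\times G,\ (s,y)\mapsto \textstyle\sum_{\text{finite}} (\partial^{\bullet}_s\rho_\tau)(t-s)\,(\text{derivatives of }\mu^L)(t-s, y^{-1}x)\Big),
\end{equation*}
and the inner $M^{N,p}_{\Delta,L}$-seminorm is finite and locally bounded in $(t,x)$ by Theorem \ref{Tspacethm} together with property (ii) of Section \ref{convorderremark} (convolution by the bounded factor coming from $\eta_0$). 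Choosing $c_0<1$ small enough that $2c_0$ plus the diameter of the time-support of $W$ still fits inside a translate of $(0,1)$ (so that the $b(t)\mu^L_t\in\mathcal{T}_{\Delta,L}$ statement is applicable) is where the constant $c_0$ gets pinned down.

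The main obstacle is the bookkeeping around which function space the kernel lives in and in which slot the derivatives act: we are pairing $W\in\mathcal{T}'_{\Delta,L}$ against something, and the target is the \emph{smaller} space $\mathcal{T}_\Delta$, so we need the $\Delta$-derivatives (which are bi-invariant, hence commute with everything) plus the $\partial_t$-derivatives of $W_\tau$ to be absorbable by the $\mathcal{T}_{\Delta,L}$-seminorm of the $\mu^L$-kernel — and crucially \emph{only} finitely many $L$- and $X$-derivatives of $\mu^L$ appear for each target seminorm, which is what makes $M^{N,p}_{\Delta,L}$ (with $N$ depending on the target order but finite) the right bound. A second, more technical point is justifying differentiation under the distribution $W$ and the passage from the formal pairing to an honest continuous function: this I would handle by a standard approximation, writing $W$ as a limit (in $\mathcal{T}'_{\Delta,L}$) and using continuity of $\star(\rho_\tau\mu^L)$ together with the already-established fact (Section \ref{convorderremark}) that right-convolution by $\rho_\tau\mu^L$ is a continuous operation on the relevant smooth spaces, so that the limit is computed in $\mathcal{T}_\Delta$ and equals $W_\tau$. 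Once these two points are in place, finiteness of every $\mathcal{T}_\Delta$-seminorm of $W_\tau$ on each $J'\Subset I$ follows, giving $W_\tau\in\mathcal{T}_\Delta(I\times G)$.
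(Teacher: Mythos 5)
Your overall strategy --- move all the $(t,x)$-derivatives onto the kernel $\rho_\tau(t-s)\mu^L_{t-s}(y^{-1}x)$ and then invoke the continuity bound $|W(f)|\le C\,M^{N,p}_{\Delta,L}(f)$ --- is not the paper's, and the two places you yourself flag as ``technical'' are exactly where it breaks down as written. First, after applying $X^l\Delta^b$ in the $x$-slot you must evaluate the $M^{N,p}_{\Delta,L}$-seminorm of the resulting kernel \emph{in the $(s,y)$-variables}. The left-invariant operators $L^{\lambda_0}X_{l_1}\cdots$ acting in $y$ on a function of the form $g(y^{-1}x)$ turn into \emph{right}-invariant operators $\breve{L}^{\lambda_0}\breve{X}_{l_1}\cdots$ acting on $g$ (writing the kernel as $g(x^{-1}y)$ instead merely moves the problem to the $x$-slot), while your $x$-derivatives stay left-invariant. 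Since $L$ is not bi-invariant, the resulting mixed strings $\breve{L}^{\lambda_0}\breve{X}_{l_1}\cdots X^{l'}\Delta^{b'}\mu^L_{t-s}$ are not among the purely left-invariant quantities controlled by Theorem \ref{Tspacethm}, and the right-invariant conversion of Section \ref{convorderremark} is available only for the bi-invariant $\Delta$ and the special basis $X$, not for $L$. Second, the interchange $X^l_x\,W(\cdots)=W(X^l_x\cdots)$ (spatial differentiation under the distribution) is asserted but not justified; making it rigorous requires convergence of difference quotients in the $\mathcal{T}^\infty_{\Delta,L}$-topology, which is again a statement about all higher derivatives of the kernel.

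The paper's proof avoids both problems with one idea your proposal is missing: the semigroup splitting $\mu^L_{s-t}=\mu^L_{s-t-\tau/2}*\mu^L_{\tau/2}$. Pairing $W$ against $(t,y)\mapsto\rho_\tau(s-t)\mu^L_{s-t-\tau/2}(y^{-1}x)$ produces a function $w_\tau\in C_c^\infty(\mathbb{R}\rightarrow C(G))$ --- only continuity in $x$ is needed there, so no spatial differentiation under $W$ ever occurs --- and then $W_\tau(s,x)=(w_\tau(s,\cdot)*\mu^L_{\tau/2})(x)$, so every spatial $\mathcal{T}_\Delta$-derivative lands on the fixed right factor $\mu^L_{\tau/2}\in\mathcal{T}_\Delta(G)$ via property (ii) of Section \ref{convorderremark}. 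This renders the left/right bookkeeping unnecessary. (A minor further point: the role of $c_0$ is to keep the time-support of $W_\tau$, namely that of $W$ fattened by $(\tau,2\tau)$, compactly inside $I$, so that membership in $\mathcal{T}_\Delta(\mathbb{R}\times G)$ upgrades to membership in the inductive limit $\mathcal{T}_\Delta(I\times G)$; it is not about fitting the kernel into a translate of $(0,1)$.)
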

\begin{proof}
First note that $\rho_\tau\mu^L\in \mathcal{T}_\Delta((0,2)\times G)$. The convolution $W_\tau$\ can be interpreted as a continuous function
\begin{eqnarray*}
\lefteqn{W_\tau(s,x)=W((t,y)\mapsto \rho_\tau(s-t)\mu_{s-t}^L(x^{-1}y))}\\
&=&W((t,y)\mapsto \rho_\tau(s-t)\mu_{s-t}^L(y^{-1}x)).
\end{eqnarray*}
So the function (shift the time by $\frac{\tau}{2}$\ for $\mu^L$)
\begin{eqnarray*}
w_\tau:\mathbb{R}\times G\rightarrow \mathbb{R},\ (s,x)\mapsto W((t,y)\mapsto \rho_\tau(s-t)\mu_{s-t-\tau/2}^L(y^{-1}x))
\end{eqnarray*}
is continuous. In fact $w_\tau\in C_c^\infty(\mathbb{R}\rightarrow C(G))$. Because $\mu_{s-t}^L=\mu_{s-t-\tau/2}^L*\mu_{\tau/2}^L$,
\begin{eqnarray*}
\lefteqn{W_\tau(s,x)= W((t,y)\mapsto \rho_\tau(s-t)\int_G\mu_{s-t-\tau/2}^L(y^{-1}xz^{-1})\mu_{\tau/2}^L(z)\,d\nu(z))}\\
&=& \int_G W((t,y)\mapsto \rho_\tau(s-t)\mu_{s-t-\tau/2}^L(y^{-1}xz^{-1}))\,\mu_{\tau/2}^L(z)d\nu(z)\\
&=&(w_\tau(s,\cdot)*\mu_{\tau/2}^L)(x).
\end{eqnarray*}
Because $\mu_{\tau/2}^L\in\mathcal{T}_{\Delta}(G)$, the convolution $W_\tau$\ is in $\mathcal{T}_{\Delta}(\mathbb{R}\times G)$. When $\tau>0$\ is small enough, $W_\tau\in \mathcal{T}_\Delta(I\times G)$.
\end{proof}
\begin{remark}
The same proof shows that $W\star(\rho_\alpha\mu^\Delta)=(\rho_\alpha\mu^\Delta)\star W\in \mathcal{T}_{\Delta}(\mathbb{R}\times G)$. The above method further shows that $W\star(\rho_\alpha\mu^L)$\ and $W\star(\rho_\alpha\mu^\Delta)$\ belong to $\mathcal{T}_{\Delta,L}(\mathbb{R}\times G)$, but this fact is not needed in the proof below.
\end{remark}
\begin{remark}
It is not clear if $W\star \rho_\tau\mu^L$\ converges to $W$\ in $\mathcal{T}'_{\Delta,L}(I\times G)$\ (which is convergence in a very weak sense), since for any $\phi\in \mathcal{T}_{\Delta,L}(I\times G)$,
\begin{eqnarray*}
(W\star \rho_\tau\mu^L)(\phi)=W(\phi\star (\check{\rho}_\tau\check{\mu}^L)),
\end{eqnarray*}
and it is not clear if $\phi\star (\check{\rho}_\tau\check{\mu}^L)$\ converges to $\phi$\ in $\mathcal{T}_{\Delta,L}(I\times G)$. See Section \ref{convorderremark}. Due to this reason, we use the two-parameter approximation sequence $\{\psi\widetilde{U}_{\alpha,\tau}\}$\ to approximate $\psi\eta U=\psi U$.
\end{remark}

We now take the following steps to prove the convergence of the approximation sequence $\{\psi\widetilde{U}_{\alpha,\tau}\}_{\alpha,\tau}$.

\paragraph{Step 1.} For any fixed $\tau\in (0,c_0]$, think of $\psi\widetilde{U}_{\alpha,\tau}$\ as a $\mathcal{T}_\Delta(I\times G)$-valued function in $\alpha$, then
\begin{eqnarray*}
\psi\widetilde{U}_{\alpha,\tau}\in C([0,1]\rightarrow \mathcal{T}_{\Delta}(I\times G)).
\end{eqnarray*}
To prove this, it suffices to check that $\widetilde{U}_{\alpha,\tau}$\ is continuous at $\alpha=0$. This is true since $\rho_\alpha\mu^\Delta\star \left((\eta U)\star \rho_\tau \mu^L\right)\rightarrow (\eta U)\star \rho_\tau \mu^L$\ as $\alpha\rightarrow 0$\ in $\mathcal{T}_{\Delta}(I\times G)$. 
\paragraph{Step 2.}
As $\tau\rightarrow 0$, the sequence $\{\psi\widetilde{U}_{\alpha,\tau}\}_{\tau>0}$\ converges uniformly to some function $G_\alpha$\ in $C([0,1]\rightarrow \mathcal{T}_{\Delta}(I\times G))$, since for any $N,p\in \mathbb{N}$,
\begin{eqnarray*}
\sup_{0\leq \alpha\leq 1}\sup_{0<\tau\leq c_0}M^{N,p}_{\Delta}(I_0\times \Omega_0,\, \partial_\tau \widetilde{U}_{\alpha,\tau})\leq \sup_{0<\tau\leq c_0}M^{N,p}_{\Delta}(I_0\times \Omega_0,\, \partial_\tau \widetilde{U}_{0,\tau})<\infty.
\end{eqnarray*}
We prove this fact in Proposition \ref{prop2} below.
\paragraph{Step 3.} For $0<\alpha\leq 1$, $G_\alpha=\psi\widetilde{U}_{\alpha,0}$, since $\left(\rho_\alpha\mu^\Delta\star (\eta U)\right)\star \rho_\tau \mu^L\rightarrow \rho_\alpha\mu^\Delta\star (\eta U)$\ in $C(I\times G)$\ as $\tau\rightarrow 0$.
\paragraph{Step 4.} Because $\mu_t^\Delta$\ commutes with any function in convolution, we have $\widetilde{U}_{\alpha,0}\rightarrow \eta U$\ in $\mathcal{T}'_{\Delta,L}(I\times G)$\ as $\alpha\rightarrow 0$. Hence $G_0=\psi\eta U=\psi U$, and $\psi U\in \mathcal{T}_\Delta(I\times G)$.\\[0.1in]

To complete the proof of Theorem \ref{mainthm}, it remains to verify Step 2.
\begin{proposition}
\label{prop2}
Under the hypotheses in Theorem \ref{mainthm}, using notations introduced at the beginning of this section,
\begin{eqnarray*}
\sup_{0<\tau\leq c_0}M^{N,p}_{\Delta}(I_0\times \Omega_0,\, \partial_\tau \widetilde{U}_{0,\tau})<\infty.
\end{eqnarray*}
\end{proposition}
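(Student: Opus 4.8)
The plan is to differentiate $\widetilde{U}_{0,\tau} = (\eta U)\star(\rho_\tau\mu^L)$ in $\tau$ and obtain a uniform bound on $M^{N,p}_\Delta(I_0\times\Omega_0,\,\partial_\tau\widetilde{U}_{0,\tau})$ by reducing, via the duality pairing for $\mathcal{T}'_{\Delta,L}$, to the Gaussian-type estimates for $\mu^L$ collected in Proposition \ref{prop1} and Remark \ref{gaussianremark}. First I would record that since $\rho_\tau(t)=\tau^{-1}\rho(t/\tau)$ is supported in $(\tau,2\tau)$, the function $b(t)\mu_t^L$ (for a suitable bump $b$) lies in $\mathcal{T}_{\Delta,L}((0,3)\times G)$ by Theorem \ref{Tspacethm}, so $\rho_\tau\mu^L\in\mathcal{T}_{\Delta,L}(\mathbb{R}\times G)$ with compact support, and $\partial_\tau(\rho_\tau\mu^L)$ can be computed explicitly: writing $g_\tau(t,x):=\rho_\tau(t)\mu_t^L(x)$, one has $\partial_\tau g_\tau(t,x) = -\tau^{-2}\rho(t/\tau)\mu_t^L(x) - \tau^{-3}\,t\,\rho'(t/\tau)\mu_t^L(x)$, a combination of terms of the shape $\tau^{-1}\psi(t/\tau)\mu_t^L(x)$ with $\psi\in C_c^\infty((1,2))$. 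So $\partial_\tau\widetilde{U}_{0,\tau} = (\eta U)\star \partial_\tau(\rho_\tau\mu^L)$ is again a convolution of the compactly supported distribution $\eta U\in\mathcal{T}'_{\Delta,L}(I\times G)$ with an explicit family of test functions supported in time in $(\tau,2\tau)$.

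Next I would apply $P_\Delta^{l,\lambda}\Delta^b\partial_s^a$ (the differential operators appearing in the $M^{N,p}_\Delta$ seminorm) to $\partial_\tau\widetilde{U}_{0,\tau}$, evaluated at a point $(s,x)\in I_0\times\Omega_0$. Since $\eta U$ is a distribution in $\mathcal{T}'_{\Delta,L}$, there exist $C,N',p'$ with $|(\eta U)(\phi)|\le C\,M^{N',p'}_{\Delta,L}(\phi)$ for $\phi$ supported in a fixed precompact subinterval; applying the differential operators to the variable $x$ (and $s$) passes them onto the $\mu^L$ factor inside the pairing, so the quantity to bound becomes $C\,M^{N'',p''}_{\Delta,L}\big((t,y)\mapsto \partial_\tau g_\tau(s-t,\,y^{-1}x)\big)$ for enlarged orders $N''$, $p''$ depending only on $N,p,N',p'$. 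The point is that $x$ ranges over the compact set $\overline{\Omega}_0$, $s$ over $\overline{I}_0$, and the $(t,y)$ in the support of the test function satisfy $s-t\in(\tau,2\tau)$; so after the change of variable $y\mapsto y^{-1}x$ one is looking at $\mu^L$ and its derivatives evaluated at time in $(\tau,2\tau)$ and at group elements $y^{-1}x$ which, because $\eta$ is supported away from the spatial diagonal relevant to $\psi$ — more precisely because the support of $t\mapsto\rho_\tau$ forces $s-t>0$ while the distribution $\eta U$ contributes the "near" part — need a short geometric argument to be controlled. Here I must separate the on-diagonal region (where $y^{-1}x$ is near $e$) from the off-diagonal region.

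The key estimate is then exactly what Remark \ref{gaussianremark} provides: for the off-diagonal piece, $M^{N'',p''}_{\Delta,L}((a\tau,b\tau)\times K,\,\mu^L)$ decays faster than any power of $\tau$ (indeed like $e^{-AM_L(\alpha\tau)}\tau^\sigma$ with $M_L(\alpha\tau)=o(1/\tau)\to-\infty$... one uses instead that $e^{AM_L(\alpha\tau)}M^{N,p}_{\Delta,L}(K,\mu^L)$ is bounded, hence $M^{N,p}_{\Delta,L}(K,\mu^L)\to 0$), which absorbs the $\tau^{-2}$ and $\tau^{-3}$ blow-up from $\partial_\tau g_\tau$. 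For the on-diagonal piece — where the relevant group elements approach $e$ — one instead uses that $\rho(t/\tau)$ is supported in $t\in(\tau,2\tau)$ together with the bound $|L^m\mu_t^L(x)|\le C_m t^{-m}\mu_t^L(e)\le C_m t^{-m}e^{o(1/t)}$ type control (the $\mathcal{T}_{\Delta,L}$-seminorm of $\mu_t^L$ grows at most polynomially in $1/t$ when combined with the $(CK^*)$ consequence $\mu_t^L(e)=e^{o(1/t)}$), so integrating against $\tau^{-1}\psi(t/\tau)$ over $t\in(\tau,2\tau)$ and then against the fixed test-function factor from $\eta U$ produces a bound polynomial in $1/\tau$ — but crucially this on-diagonal contribution to $\widetilde{U}_{0,\tau}$ is, up to the $\eta U$ pairing, supported where $x$ is near the relevant spatial set; one checks it is in fact the benign part because $\psi$ and $I_0\times\Omega_0$ were chosen strictly inside, away from the time boundary. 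I expect the \textbf{main obstacle} to be precisely this bookkeeping: organizing the convolution so that the singular $\tau^{-2},\tau^{-3}$ factors land against the off-diagonal Gaussian decay of $\mu^L$ (which is super-polynomially small by $(CK^*)$ via Proposition \ref{prop1}), rather than against the on-diagonal part; the geometric fact that makes this work is that $\mathrm{supp}(\eta)$ is a fixed compact set in $I'\times\Omega'$ while the time-shift by $(\tau,2\tau)$ and the evaluation at $(s,x)\in I_0\times\Omega_0$ keep the spatial increment $y^{-1}x$ in a fixed compact set avoiding $e$ except on a controlled region. Once the region decomposition is set up, both pieces are handled by the already-established estimates, and the supremum over $\tau\in(0,c_0]$ is finite.
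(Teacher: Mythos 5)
There is a genuine gap, and it is structural: your argument never uses the equation $(\partial_t+L)U=F$, whereas this is the essential ingredient of the proof. You differentiate $g_\tau(t,x)=\rho_\tau(t)\mu_t^L(x)$ in $\tau$ directly and obtain terms of size $\tau^{-2}\rho(t/\tau)\mu_t^L(x)$, and you then hope to absorb these singular factors into off-diagonal Gaussian decay. But the on-diagonal region is not benign: for $(s,x)\in I_0\times\Omega_0$ the test function $(t,y)\mapsto\eta(t,y)\,\partial_\tau g_\tau(s-t,y^{-1}x)$ is supported in $t\in(s-2\tau,s-\tau)$ and concentrated (through $\mu^L_{s-t}$) near $y=x$, i.e.\ precisely inside $I_1\times\Omega_1$ where $\eta\equiv 1$, so the distribution $\eta U$ is fully active there. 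Your own estimate for this piece is ``polynomial in $1/\tau$,'' which is not a bound uniform over $\tau\in(0,c_0]$; the subsequent claim that this part is benign ``because $I_0\times\Omega_0$ was chosen strictly inside'' has no mechanism behind it. Indeed, for a generic distribution $U$ (not solving any parabolic equation) the quantity $M^{N,p}_\Delta(I_0\times\Omega_0,\partial_\tau\widetilde U_{0,\tau})$ does blow up as $\tau\to0$, so no argument relying only on the finite order of $\eta U$ and on Gaussian bounds for $\mu^L$ can succeed.

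The paper's proof avoids this by the identity $\partial_\tau\rho_\tau(r)=-\partial_r\bar\rho_\tau(r)$ with $\bar\rho_\tau(r)=\tfrac{r}{\tau^2}\rho(r/\tau)$: the $\tau$-derivative is converted into an $s$-derivative, which after integration by parts inside the pairing and the heat-equation identity $\partial_s\mu^L_{s-t}=-L\mu^L_{s-t}$ produces exactly $(\partial_t+L)U=F$ tested against $\eta\bar\rho_\tau(s-\cdot)\mathcal L_{x^{-1}}\mu^L_{s-\cdot}$, plus two error terms. The resulting decomposition $I_\tau+I\!I_\tau+I\!I\!I_\tau$ is what makes the singular on-diagonal contribution disappear: $I_\tau=-(\eta F)\star(\bar\rho_\tau\mu^L)$ is controlled by the hypothesis $\eta F\in\mathcal T_\Delta$ (via the right-invariant expression of the $M^{N,p}_\Delta$ norms), $I\!I_\tau$ carries a factor $\partial_t\eta$ supported in $I_1^c$ and therefore vanishes identically for $\tau<\tfrac12 d(I_0,I_1^c)$, and only the commutator term $I\!I\!I_\tau=(\eta LU-L(\eta U))(\cdots)$, which is genuinely supported off the diagonal, is handled by Proposition \ref{prop1} and Remark \ref{gaussianremark}. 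Your proposal captures the treatment of this last term correctly, but without the integration by parts that routes the near-diagonal mass into the $F$-term, the proof does not close.
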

\begin{proof} For short we write $\widetilde{U}_\tau$\ for $\widetilde{U}_{0,\tau}$\ in the proof. By computation, $\partial_\tau\rho_\tau(r)=-\partial_r\bar{\rho}_\tau(r)$\ where $\bar{\rho}_\tau(r):=\frac{r}{\tau^2}\rho(\frac{r}{\tau})$;
\begin{eqnarray*}
\lefteqn{\partial_\tau(\widetilde{U}_{\tau}(s,x))= U\left((t,y)\mapsto-\eta(t,y)\partial_s\bar{\rho}_\tau(s-t)\mu_{s-t}^L(x^{-1}y)\right)}\\
&=&U\left(-\eta\partial_s(\bar{\rho}_\tau(s-\cdot)\mathcal{L}_{x^{-1}}\mu_{s-\cdot}^L)\right)+ U\left(\eta\bar{\rho}_\tau(s-\cdot)\partial_s\mathcal{L}_{x^{-1}}\mu_{s-\cdot}^L\right).
\end{eqnarray*}
Here $\mathcal{L}$\ represents left-translation, i.e., for any $x\in G$, for any function $f$, $\mathcal{L}_{x}f(y)=f(xy)$.
In the first term, rewrite the function inside $U$\ as
\begin{eqnarray*}
\lefteqn{-\eta(t,y)\partial_s(\bar{\rho}_\tau(s-t)\mathcal{L}_{x^{-1}}\mu_{s-t}^L(y))}\\
&=&\partial_t(\eta(t,y)\bar{\rho}_\tau(s-t)\mathcal{L}_{x^{-1}}\mu_{s-t}^L(y))-\partial_t\eta(t,y)\cdot \bar{\rho}_\tau(s-t)\mathcal{L}_{x^{-1}}\mu_{s-t}^L(y).
\end{eqnarray*}
For the second term, apply $\partial_s\mathcal{L}_{x^{-1}}\mu_{s-t}^L(y)=-\mathcal{L}_{x^{-1}}L\mu_{s-t}^L(y)=-L\mathcal{L}_{x^{-1}}\mu_{s-t}^L(y)$. We get
\begin{eqnarray*}
\lefteqn{\partial_\tau(\widetilde{U}_\tau(s,x))=-((\partial_t+L) U)(\eta \bar{\rho}_\tau(s-\cdot)\mathcal{L}_{x^{-1}}\mu_{s-\cdot}^L)}\\
&&-U\left(\partial_1\eta \cdot \bar{\rho}_\tau(s-\cdot)\mathcal{L}_{x^{-1}}\mu^L_{s-\cdot}\right)+(\eta LU-L(\eta U))(\bar{\rho}_\tau(s-\cdot)\mathcal{L}_{x^{-1}}\mu_{s-\cdot}^L)\\
&:=& I_\tau(s,x)+I\!I_\tau(s,x)+I\!I\!I_\tau(s,x). 
\end{eqnarray*}
Here $\partial_1\eta(t,y):=\partial_t\eta(t,y)$, the partial derivative w.r.t. the first variable. We estimate each part separately.
\paragraph{Estimate of $I_\tau$.} For
\begin{eqnarray*}
I_\tau(s,x)=-((\partial_t+L) U)(\eta \bar{\rho}_\tau(s-\cdot)\mathcal{L}_{x^{-1}}\mu_{s-\cdot}^L)=-(\eta F)\star(\bar{\rho}_\tau\mu^L)(s,x),
\end{eqnarray*}
to use the condition that $\eta F\in \mathcal{T}_{\Delta}(I\times G)$\ with support in $I_2\times \Omega_2$, we use a trick mentioned in \cite{functionspaces,hypoellipticity} which works as follows.
Since $\Delta$\ is bi-invariant, $\Delta=-\sum X_i^2=-\sum \breve{X}_i^2$. In \cite{functionspaces} it is shown that 
\begin{eqnarray*}
\lefteqn{M^N_\Delta(f)=\sup_{x\in G}\sup_{\substack{k,m\in \mathbb{N}\\k+2m\leq N}}\left(\sum_{l\in \mathcal{I}^k}|X_{l_1}X_{l_2}\cdots X_{l_k}\Delta^m f(x)|^2\right)^{1/2}}\\
&=&\sup_{x\in G}\sup_{\substack{k,m\in \mathbb{N}\\k+2m\leq N}}\left(\sum_{l\in \mathcal{I}^k}|\breve{X}_{l_1}\breve{X}_{l_2}\cdots \breve{X}_{l_k}\Delta^m f(x)|^2\right)^{1/2}.
\end{eqnarray*}
The same is true for the $M^{N,p}_\Delta$\ norms. Hence for any $0<\tau\leq c_0$, using the ``right-invariant'' expression of $M^{N,p}_\Delta$, we get that
\begin{eqnarray*}
M^{N,p}_\Delta(I_\tau)\leq M^{N,p}_\Delta(\eta F)\star \bar{\rho}_\tau\mu^L\leq 2M^{N,p}_\Delta(\eta F).
\end{eqnarray*}
\paragraph{Estimate of $I\!I_\tau$.} To estimate $M^{N,p}_\Delta(I_0\times \Omega_0,\,I\!I_\tau)$, as in the proof of Lemma \ref{convlemma}, let
\begin{eqnarray}
u_\tau(s,\omega):=U\left((t,y)\mapsto \partial_t\eta(t,y) \bar{\rho}_\tau(s-t)\mu^L_{s-t-\tau/2}(y^{-1}\omega)\right).
\end{eqnarray}
Then
\begin{eqnarray*}
I\!I_\tau(s,x)=-u_\tau(s,\cdot)*\mu_{\tau/2}^L(x).
\end{eqnarray*}
Note that $s\in I_0$, whereas for the function $\partial_t\eta(t,y)\bar{\rho}_\tau(s-t)$\ to be nonzero, we need $t\in I_1^c$\ and $\tau<s-t<2\tau$. So
\begin{eqnarray*}
M^{N,p}_{\Delta}(I_0\times \Omega_0,\,I\!I_\tau)=0\ \mbox{for }0<\tau<\tau_0,
\end{eqnarray*}
where $\tau_0:=\min\left\{\frac{1}{2}d(I_0,I_1^c),\,c_0\right\}$. For $\tau_0\leq \tau\leq c_0$, since $u_\tau\in C^\infty_c(\mathbb{R}\rightarrow C(G))$, by Minkowski's inequality,
\begin{eqnarray*}
\lefteqn{M^{N,p}_{\Delta}(I_0\times\Omega_0,\,I\!I_\tau)}\\
&\leq& \sup_{\substack{0\leq a\leq p\\k+2m\leq N}}\sup_{\lambda\in \Lambda(k,m)}\sup_{(s,x)\in I\times G}\left\{\left|\partial_s^au_\tau\right|*\left(\sum_{l\in \mathcal{I}^k}|P_\Delta^{l,\lambda}\mu_{\tau/2}^L|^2\right)^{1/2}(x)\right\}.
\end{eqnarray*}
Because $U\in \mathcal{T}_{\Delta,L}'(I\times G)$, there exist some $C>0$\ and $N',p'\in \mathbb{N}$\ that depend on $N,p,U,\mbox{supp}\{\eta\},\rho$, such that for any $0\leq a\leq p$,
\begin{eqnarray*}
||\partial_s^au_\tau||_{L^\infty(I\times G)}\leq C\frac{1}{\tau^{p'}}M^{N',p'}_{\Delta,L}(\eta)M^{N',p'}_{\Delta,L}((\tau/2,\,3\tau/2)\times G,\,\mu^L).
\end{eqnarray*}
Here the term $\frac{1}{\tau^{p'}}$\ is from taking derivatives of $\bar{\rho}_\tau$. Hence
\begin{eqnarray*}
\lefteqn{\sup_{0<\tau\leq c_0}M^{N,p}_{\Delta}(I_0\times \Omega_0,\,I\!I_\tau)=\sup_{\tau_0\leq\tau\leq c_0}M^{N,p}_{\Delta}(I_0\times \Omega_0,\,I\!I_\tau)}\\
&\leq& \frac{C\nu(G)}{\tau_0^{p'}}M^{N',p'}_{\Delta,L}(\eta)M^{N',p'}_{\Delta,L}\left(\left(\frac{\tau_0}{2},\,\frac{3c_0}{2}\right)\times G,\,\mu^L\right) \sup_{\tau_0\leq \tau\leq c_0}M^N_{\Delta}(\mu_{\tau/2}^L)<\infty.
\end{eqnarray*}
\paragraph{Estimate of $I\!I\!I_\tau$.} For the last part $I\!I\!I_\tau$, let $\widetilde{V}:=\eta LU-L(\eta U)$. Then $\widetilde{V}$\ is supported away from $I_1\times\Omega_1$, and
\begin{eqnarray*}
I\!I\!I_\tau(s,x)=\widetilde{V}(\Phi\bar{\rho}_\tau(s-\cdot)\mathcal{L}_{x^{-1}}\mu_{s-\cdot}^L)
\end{eqnarray*}
for some hollow-shaped function $\Phi\in \mathcal{B}(I\times G)$. More precisely, for some $J_1\times\Theta_1$, $J_2\times \Theta_2$\ satisfying
\begin{eqnarray*}
I_0\times\Omega_0\Subset J_1\times\Theta_1\Subset J_2\times\Theta_2\Subset I_1\times\Omega_1,
\end{eqnarray*}
$\Phi$\ satisfies
\begin{eqnarray*}
\mbox{supp}\{\Phi\}\subset I'\times\Omega'\setminus J_1\times\Theta_1,\ \Phi\equiv 1\ \mbox{on }I_2\times\Omega_2\setminus J_2\times\Theta_2.
\end{eqnarray*}
As in the decomposition of $I\!I_\tau$, $I\!I\!I_\tau$\ can be written as the following convolution
\begin{eqnarray*}
I\!I\!I_\tau(s,x)=\left((\Phi\widetilde{V})\star \bar{\rho}_\tau\mathcal{L}_{-\frac{\tau}{2}}\mu^L\right)(s,\cdot)*\mu_{\tau/2}^L(x),
\end{eqnarray*}
where
\begin{eqnarray*}
\left((\Phi\widetilde{V})\star \bar{\rho}_\tau\mathcal{L}_{-\frac{\tau}{2}}\mu^L\right)(s,w)=\widetilde{V}\left((t,y)\mapsto \Phi(t,y)\bar{\rho}_\tau(s-t)\mu_{s-t-\tau/2}^L(y^{-1}w)\right).
\end{eqnarray*}
So by Minkowski's inequality,
\begin{eqnarray*}
\lefteqn{M^{N,p}_{\Delta}(I_0\times\Omega_0,\,I\!I\!I_\tau)}\\
&\hspace{-.2in}\leq&\hspace{-.1in}\sup_{\substack{(s,x)\in I_0\times\Omega_0\\\lambda\in \Lambda(k,m)\\a,k,m}} \left\{\left|\partial_s^a\left((\Phi\widetilde{V})\star \bar{\rho}_\tau\mathcal{L}_{-\frac{\tau}{2}}\mu^L\right)(s,\cdot)\right|*\left(\sum_{l\in \mathcal{I}^k}|P_\Delta^{l,\lambda}(\mu_{\tau/2}^L)|^2\right)^{1/2}(x)\right\},
\end{eqnarray*}
where the supremum is over $\{(a,k,m):0\leq a\leq p,\,k+2m\leq N\}$.
Let $\Theta_0$\ be an open set satisfying $\Omega_0\Subset \Theta_0\Subset \Theta_1$. We split the convolution into two parts,
\begin{eqnarray*}
\lefteqn{\left|\partial_s^a\left((\Phi\widetilde{V})\star \bar{\rho}_\tau\mathcal{L}_{-\frac{\tau}{2}}\mu^L\right)(s,\cdot)\right|*\left(\sum_{l\in \mathcal{I}^k}|P_\Delta^{l,\lambda}(\mu_{\tau/2}^L)|^2\right)^{1/2}(x)}\\
&\hspace{-.2in}=& \hspace{-.15in}\int_{\Theta_0}\left|\partial_s^a\left((\Phi\widetilde{V})\star \bar{\rho}_\tau\mathcal{L}_{-\frac{\tau}{2}}\mu^L\right)(s,y)\right|\left(\sum_{l\in \mathcal{I}^k}|P_\Delta^{l,\lambda}(\mu_{\tau/2}^L)(y^{-1}x)|^2\right)^{1/2}\,d\nu(y)\\
&&\hspace{-.3in}+\int_{\Theta_0^c}\left|\partial_s^a\left((\Phi\widetilde{V})\star \bar{\rho}_\tau\mathcal{L}_{-\frac{\tau}{2}}\mu^L\right)(s,y)\right|\left(\sum_{l\in \mathcal{I}^k}|P_\Delta^{l,\lambda}(\mu_{\tau/2}^L)(y^{-1}x)|^2\right)^{1/2}\,d\nu(y).
\end{eqnarray*}

The first integral is bounded above by
\begin{eqnarray*}
\lefteqn{\nu(G)\cdot ||(\Phi\widetilde{V})\star \bar{\rho}_\tau\mathcal{L}_{-\frac{\tau}{2}}\mu^L||_{C^p(I_0\rightarrow L^\infty(\Theta_0))}M^N_{\Delta}(\mu^L_{\tau/2})}\\
&\leq& \nu(G)\cdot \frac{C_1}{\tau^{p_1}}M^{N_1,p_1}_{\Delta,L}((\tau/2,\,3\tau/2)\times (\Theta_1^c)^{-1}\Theta_0,\,\mu^L)M^{N}_{\Delta}(\mu_{\tau/2}^L)
\end{eqnarray*}
for some $C_1>0$\ and $N_1,p_1\in \mathbb{N}$\ that depend on $N,p,\rho,\Phi$, and the distribution $\widetilde{V}$\ (in other words, on $U$). Here for any $0<\tau<1$,
\begin{eqnarray*}
M^{N}_{\Delta}(\mu_{\tau/2}^L)\leq C_1'\frac{e^{AM(\alpha\tau)}}{\tau^{N}}
\end{eqnarray*}
for some constants $C_1',\alpha,A>0$, $M(s):=\log{\mu_s^L(e)}$. See Proposition \ref{prop1} and \cite{Gaussian}.

The second integral is bounded above by
\begin{eqnarray*}
\lefteqn{\nu(G)\cdot||(\Phi\widetilde{V})\star \bar{\rho}_\tau\mathcal{L}_{-\frac{\tau}{2}}\mu^L||_{C^p(I_0\rightarrow L^\infty(G))}M^N_{\Delta}((\Theta_0^c)^{-1}\Omega_0,\,\mu_{\tau/2}^L)}\\
&\leq& \nu(G)\cdot \frac{C_1}{\tau^{p_1}}M^{N_1,p_1}_{\Delta,L}((\tau/2,\,3\tau/2)\times G,\,\mu^L)M^N_{\Delta}((\Theta_0^c)^{-1}\Omega_0,\,\mu_{\tau/2}^L).
\end{eqnarray*}

Because $\overline{(\Theta_1^c)^{-1}\Theta_0}$\ and $\overline{(\Theta_0^c)^{-1}\Omega_0}$\ do not contain $e$, applying the off-diagonal Gaussian estimate of $\mu^L_t$\ with $\tau/2\leq t\leq 3\tau/2$\ in Proposition \ref{prop1} (see Remark \ref{gaussianremark}) then shows that
\begin{eqnarray*}
\sup_{0<\tau\leq c_0}M^{N,p}_{\Delta}(I_0\times\Omega_0,\,I\!I\!I_\tau)<\infty.
\end{eqnarray*}
Combining the estimates for $I_\tau$, $I\!I_\tau$, $I\!I\!I_\tau$, we conclude that
\begin{eqnarray*}
\sup_{0<\tau\leq c_0}M^{N,p}_{\Delta}(I_0\times \Omega_0,\, \partial_\tau \widetilde{U}_{\tau})<\infty.
\end{eqnarray*}
\end{proof}
\section{Appendix}
In this appendix we prove some equivalence relations between function space (semi)norms. The following lemma justifies the use of only $\{X_i\}$\ in the definition of the $M_{\Delta,L}^{N}$\ norms (\ref{mixednorm}).
\begin{lemma}
\label{comparablelemma}
Let $G$\ be a compact connected metrizable group as before. Let $L_1,L_2$\ be two form-comparable left-invariant sub-Laplacians on $G$. Let $\{X_i\}_{i\in \mathcal{I}}$, $\{Y_i\}_{i\in \mathcal{I}}$\ be two projective bases such that
\begin{eqnarray*}
L_1=-\sum_{i\in \mathcal{I}}X_i^2,\ L_2=-\sum_{i\in \mathcal{I}}Y_i^2.
\end{eqnarray*}
Then for any $k,p\in \mathbb{N}$, the $S_{L_1}^{k,p}$\ and $S_{L_2}^{k,p}$\ (semi)norms are comparable.
\end{lemma}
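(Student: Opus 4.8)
The plan is to reduce the comparison to uniform finite-dimensional linear algebra on the Lie algebras $\mathfrak g_\alpha$. The mechanism is that form-comparability, although a priori only an integrated statement about the Dirichlet forms, forces a pointwise comparison of the associated squared gradients on each $\mathfrak g_\alpha$, and the coefficients of the change of frame between $\{X_i\}$ and $\{Y_i\}$ at the Lie algebra level are \emph{constants}, so the comparison propagates to derivatives of every order with no lower-order error terms.

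\emph{Step 1: from the forms to the Lie algebra.} For each $\alpha\in\aleph$ write $X_i^\alpha:=d\pi_\alpha(X_i)$, $Y_i^\alpha:=d\pi_\alpha(Y_i)$, let $A_\alpha\subset\mathcal I$ be the finite set for which $\{X_i^\alpha\}_{i\in A_\alpha}$ is a basis of $\mathfrak g_\alpha$, and expand $Y_i^\alpha=\sum_{j\in A_\alpha}t^\alpha_{ij}X_j^\alpha$, so that $T_\alpha:=(t^\alpha_{ij})$ defines a linear map (with finitely many nonzero rows) $\mathbb R^{A_\alpha}\to\mathbb R^{\mathcal I}$. Plugging $f=\phi\circ\pi_\alpha$, $\phi\in C^\infty(G_\alpha)$, into $c\,\mathcal E_{L_1}\le\mathcal E_{L_2}\le C\,\mathcal E_{L_1}$ and using that $\pi_\alpha$ pushes $\nu$ forward to Haar measure on $G_\alpha$ yields
\[
 c\int_{G_\alpha}\sum_{j}|X_j^\alpha\phi|^2\,d\nu_\alpha\ \le\ \int_{G_\alpha}\sum_{i}|Y_i^\alpha\phi|^2\,d\nu_\alpha\ \le\ C\int_{G_\alpha}\sum_{j}|X_j^\alpha\phi|^2\,d\nu_\alpha
\]
for every $\phi$. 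A standard localization/oscillation argument on the Lie group $G_\alpha$ (concentrate $\phi$ near $e$ and let it oscillate, so the ratio of the two integrands tends to its value at $e$; this is the usual fact that comparable strongly local Dirichlet forms have comparable energy measures) upgrades this to the pointwise bound
\[
 c\sum_{j}|\langle\xi,X_j^\alpha\rangle|^2\ \le\ \sum_{i}|\langle\xi,Y_i^\alpha\rangle|^2\ \le\ C\sum_{j}|\langle\xi,X_j^\alpha\rangle|^2\qquad\text{for all }\xi\in\mathfrak g_\alpha^{*}.
\]
Since $\xi\mapsto(\langle\xi,X_j^\alpha\rangle)_{j\in A_\alpha}$ maps onto $\mathbb R^{A_\alpha}$ and $\langle\xi,Y_i^\alpha\rangle=\sum_j t^\alpha_{ij}\langle\xi,X_j^\alpha\rangle$, this is precisely $\sqrt c\,|v|\le|T_\alpha v|\le\sqrt C\,|v|$ for all $v\in\mathbb R^{A_\alpha}$; hence $T_\alpha$ is injective with $\|T_\alpha\|\le\sqrt C$ and with left inverse $T_\alpha^{+}=(T_\alpha^{*}T_\alpha)^{-1}T_\alpha^{*}$ satisfying $\|T_\alpha^{+}\|\le c^{-1/2}$, \emph{uniformly in $\alpha$}. (Equivalently: if $L_2=-\sum a_{ij}X_iX_j$ with matrix $A$, then $A^\alpha=T_\alpha^{*}T_\alpha$ and form-comparability says $cI\preceq A\preceq CI$.)

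\emph{Step 2: propagation to all orders.} Fix $f\in\mathcal B(G)$ and choose $\alpha$ with $f=\phi\circ\pi_\alpha$. Because the $t^\alpha_{ij}$ are constants and the $X_j$ are left-invariant, a one-line induction on $k$ (each $X_{j_2}\cdots X_{j_k}f$ again factors through $G_\alpha$, so applying $Y_{l_1}$ to it introduces the factor $\sum_{j_1}t^\alpha_{l_1 j_1}X_{j_1}$) gives
\[
 Y_{l_1}\cdots Y_{l_k}f=\sum_{j_1,\dots,j_k\in A_\alpha}t^\alpha_{l_1j_1}\cdots t^\alpha_{l_kj_k}\,X_{j_1}\cdots X_{j_k}f,\qquad l\in\mathcal I^{k}.
\]
Interpreted as an identity between $\mathcal I^{k}$-indexed families, this says that at each point $x$ the vector of components of $D^m_x f$ in the $Y$-frame equals $T_\alpha^{\otimes m}$ applied to the vector of its components in the $X$-frame, the $\ell^2$-norms of these two vectors being $|D^m_x f|_{L_2}$ and $|D^m_x f|_{L_1}$ respectively. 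As $\|T_\alpha^{\otimes m}\|=\|T_\alpha\|^m\le C^{m/2}$ and $(T_\alpha^{+})^{\otimes m}$ is a left inverse of $T_\alpha^{\otimes m}$ of norm $\le c^{-m/2}$, we conclude
\[
 c^{\,m/2}\,|D^m_x f|_{L_1}\ \le\ |D^m_x f|_{L_2}\ \le\ C^{\,m/2}\,|D^m_x f|_{L_1}\qquad(0\le m\le k,\ x\in G).
\]

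\emph{Step 3: conclusion, and the main obstacle.} Everything above applies verbatim to $\partial_t^{a}f$ in place of $f$, since $\partial_t$ commutes with all the $X_i$ and $Y_i$; taking the supremum over $0\le a\le p$, $0\le m\le k$, over $x\in G$ and over the time variable therefore gives $c_0\,S^{k,p}_{L_1}(f)\le S^{k,p}_{L_2}(f)\le C_0\,S^{k,p}_{L_1}(f)$ with $c_0=\min(1,c^{k/2})$, $C_0=\max(1,C^{k/2})$, for all $f$ in the relevant Bruhat space, and by density the completions coincide as topological vector spaces. The only non-formal ingredient is Step 1 — passing from the integrated Dirichlet-form comparison to the pointwise comparison of squared gradients on each $\mathfrak g_\alpha$; once that is in hand, the rest is uniform finite-dimensional linear algebra, which goes through precisely because the change-of-frame coefficients $t^\alpha_{ij}$ are genuine constants (a consequence of left-invariance together with $f$ factoring through a Lie quotient), so that no lower-order corrections ever appear.
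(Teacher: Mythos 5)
Your proposal is correct and follows essentially the same route as the paper: express $\{Y_i\}$ in terms of $\{X_i\}$ via a constant change-of-frame matrix $T$, observe that form-comparability is exactly a uniform two-sided $\ell^2$ operator bound on $T$ (equivalently $cI\preceq A\preceq CI$), and propagate this to the $k$-th derivative arrays by substituting one slot at a time — your tensor-power formulation $T^{\otimes m}$ is just a compact restatement of the paper's iterated Minkowski-type substitution. The only presentational difference is that you sketch a justification (localization/oscillation on each $G_\alpha$) for passing from the integrated Dirichlet-form comparison to the pointwise quadratic-form inequality, a step the paper simply asserts as an equivalence.
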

\begin{proof}
Under the projective basis $\{X_i\}_{i\in \mathcal{I}}$, $L_2=-\sum_{i,j\in \mathcal{I}}a_{ij}X_iX_j$\ for some real symmetric nonnegative coefficient matrix $A=(a_{ij})$. Because $L_1$\ and $L_2$\ are form comparable, $c\mathcal{E}_{L_1}\leq \mathcal{E}_{L_2}\leq C\mathcal{E}_{L_1}$\ for some $c,C>0$, which is equivalent to the condition
\begin{eqnarray}
\label{comparableequiv}
c\sum \xi_i^2\leq\sum a_{ij}\xi_i\xi_j\leq C\sum \xi_i^2,
\end{eqnarray}
for any $\bf{\xi}=(\xi_i)\in \mathbb{R}^{(\mathcal{I})}$.

Because $\{X_i\}_{i\in \mathcal{I}}$\ is a projective basis, there exists a matrix $\{T_{j}^{i}\}_{i,j\in \mathcal{I}}$\ such that $Y_j=\sum_{i\in \mathcal{I}}T^{i}_{j}X_i$.
Define a map $T:\mathbb{R}^{(\mathcal{I})}\rightarrow \mathbb{R}^{(\mathcal{I})}$\ as
\begin{eqnarray*}
T(\xi)=T((\xi_i)_{i\in \mathcal{I}})=(\eta_j)_{j\in \mathcal{I}},\ \ \mbox{where }\eta_j=\sum_{i\in \mathcal{I}}T^{i}_{j}\xi_i.
\end{eqnarray*} 
Then $L=-\sum a_{ij}X_iX_j=-\sum Y_j^2$\ implies that
\begin{eqnarray}
\sum_{i,j\in \mathcal{I}} a_{ij}\xi_i\xi_j=\sum_{j\in \mathcal{I}}|\sum_{i\in \mathcal{I}}T_{j}^{i}\xi_i|^2=||T(\xi)||_{l^2}^2,
\end{eqnarray}
and (\ref{comparableequiv}) is thus equivalent to
\begin{eqnarray}
\label{l2tol2}
c\leq ||T||_{l^2\rightarrow l^2}\leq C\label{2}.
\end{eqnarray}

To show the equivalence of the two (semi)norms $S^{k,p}_{L_1}$\ and $S^{k,p}_{L_2}$\ on $I\times G$\ it suffices to show the equivalence of $S^{k}_{L_1}$\ and $S^{k}_{L_2}$\ on $G$. The proof is essentially a change of variable for the $k-$linear form
\begin{eqnarray*}
(X_{i_1},\cdots,X_{i_k})\mapsto D_x^kf(X_{i_1},\cdots,X_{i_k})=X_{i_1}X_{i_2}\cdots X_{i_k}f(x).
\end{eqnarray*}
More precisely, for any $f\in \mathcal{B}(G)$, for any $1\leq r\leq k$,
\begin{eqnarray*}
\lefteqn{\left(\sum_{j_1,\cdots,j_k} |Y_{j_1}Y_{j_2}\cdots Y_{j_k}f(x)|^2\right)^{1/2}}\\
&=&\left(\sum_{j_1,\cdots,j_{r-1},j_{r+1},\cdots,j_k}\sum_{j_r} \left|Y_{j_1}\cdots \left(\sum_{i}T_{j_r}^iX_i\right)\cdots Y_{j_k}f(x)\right|^2\right)^{1/2}\\
&=&\left(\sum_{j_1,\cdots,j_{r-1},j_{r+1},\cdots,j_k}\sum_{j_r} \left|\sum_{i}T_{j_r}^i\cdot Y_{j_1}\cdots (X_i\cdots Y_{j_k}f(x))\right|^2\right)^{1/2}\\
&=&\left(\sum_{j_1,\cdots,j_{r-1},j_{r+1},\cdots,j_k}\left|\left|T\left((Y_{j_1}\cdots Y_{j_{r-1}}X_iY_{j_{r+1}}\cdots Y_{j_k}f(x))_{i\in \mathcal{I}}\right)\right|\right|_{l^2}^2\right)^{1/2}.
\end{eqnarray*}
In the last line, $(Y_{j_1}\cdots Y_{j_{r-1}}X_iY_{j_{r+1}}\cdots Y_{j_k}f(x))_{i\in \mathcal{I}}$\ denotes the vector indexed by $i$\ with entries $Y_{j_1}\cdots Y_{j_{r-1}}X_iY_{j_{r+1}}\cdots Y_{j_k}f(x)$\ (the other indices $j_1,\cdots,j_{r-1}$, $j_{r+1},\cdots,j_k$\ are fixed). Hence by (\ref{l2tol2}), 
\begin{eqnarray*}
\lefteqn{\left(\sum_{j_1,\cdots,j_k} |Y_{j_1}Y_{j_2}\cdots Y_{j_k}f(x)|^2\right)^{1/2}}\\
&\leq& C\left(\sum_{j_1,\cdots,\hat{j_r},\cdots,j_k}||(Y_{j_1}\cdots X_i\cdots Y_{j_k}f(x))_{i}||_{l^2}^2\right)^{1/2}.
\end{eqnarray*}
Repeating this step gives
\begin{eqnarray*}
\left(\sum_{j_1,\cdots,j_k} |Y_{j_1}Y_{j_2}\cdots Y_{j_k}f(x)|^2\right)^{1/2}\leq C^k\left(\sum_{i_1,\cdots,i_k} |X_{i_1}X_{i_2}\cdots X_{i_k}f(x)|^2\right)^{1/2}.
\end{eqnarray*}
Similarly we have
\begin{eqnarray*}
c^k\left(\sum_{i_1,\cdots,i_k} |X_{i_1}X_{i_2}\cdots X_{i_k}f(x)|^2\right)^{1/2}\leq \left(\sum_{j_1,\cdots,j_k} |Y_{j_1}Y_{j_2}\cdots Y_{j_k}f(x)|^2\right)^{1/2}.
\end{eqnarray*}
So the $S^k_{L_1}$\ and $S^k_{L_2}$\ norms, and with time derivatives added in, the $S^{k,p}_{L_1}$\ and $S^{k,p}_{L_2}$\ (semi)norms, are equivalent respectively.
\end{proof}
When $L_1=\Delta$\ is bi-invariant and $L_2=L$\ is some left-invariant form-comparable perturbation of $\Delta$, by repeating the proof of the above lemma with $\Delta$\ and $L$\ inserted in the differential operator chain, we conclude that the norms defined by taking supremum of any of the following are equivalent
\begin{itemize}
    \item[(i)] $\left(\sum |L^{\lambda_0}X_{l_1}L^{\lambda_1}X_{l_2}L^{\lambda_2}\cdots X_{l_k}L^{\lambda_k}\Delta^b f|^2\right)^{1/2}$;
    \item[(ii)] $\left(\sum |L^{\lambda_0}Y_{l_1}L^{\lambda_1}Y_{l_2}L^{\lambda_2}\cdots Y_{l_k}L^{\lambda_k}\Delta^b f|^2\right)^{1/2}$;
    \item[(iii)] $\left(\sum |L^{\lambda_0}X_{l_1}Y_{l_1'}L^{\lambda_1}X_{l_2}Y_{l_2'}L^{\lambda_2}\cdots X_{l_k}Y_{l_k'}L^{\lambda_k}\Delta^b f|^2\right)^{1/2}$.
\end{itemize}

\bibliographystyle{plain}
\bibliography{infdimgroup}

\end{document}